\newcolumntype{?}{!{\vrule width 2.5pt}}
\newtheorem{theorem}{Theorem}[section]
\newtheorem{lemma}[theorem]{Lemma}
\newtheorem{conjecture}{Conjecture}
\newtheorem{proposition}{Proposition}
\newtheoremstyle{bfnote}%
  {}{}
  {\itshape}{}
  {\bfseries}{.}
  { }{\thmname{#1}\thmnumber{ #2}\thmnote{ (#3)}}
\theoremstyle{bfnote}
\newtheorem{defi}{Definition}
\begin{document}

\title{An urn model for opinion propagation on networks}
\author{Andrew Melchionna}
\maketitle

\begin{abstract}
We consider a coupled P\'{o}lya's urn scheme for social dynamics on networks. Agents hold continuum-valued opinions on a two-state issue and randomly converse with their neighbors on a graph, agreeing on one of the two states. The probability of agreeing on a given state is a simple function of both of agents' opinions, with higher importance given to agents who have participated in more conversations. Opinions are then updated based on the results of the conversation. We show that this system is governed by a discrete version of the stochastic heat equation, and prove that the system reaches a consensus of opinion.
\end{abstract}

\section{Introduction}
\subsection{Statement of Problem and Result}
Let $G = (\mathcal{V},\mathcal{E})$ be a simple, connected graph, with each vertex $i \in \mathcal{V}$ representing an individual agent. In our model of opinion propagation, agents discuss an issue with their neighbors, each conversation resulting randomly in either an agreement on state $U$ or an agreement on state $V$. If two learners agree on state $U$ or $V$, both of the learners increase their propensity to prefer state $U$ or $V$, respectively, in the future. We make this precise in the following discussion.

For every vertex $i \in \mathcal{V}$ and timestep $t \in \mathbb{N} \cup \{0\},$ let the weights $(u^i_t,v^i_t) \geq 0$ represent the propensities of vertex $i$ for $U$ and $V$, respectively, at time $t$. For ease of notation, we write $(\vec{u}_t, \vec{v}_t)$, where $\vec{u}_t, \vec{v}_t \in \mathbb{R}^\mathcal{V}$ have components $(u_t^i)_{i \in \mathcal{V}}$ and $(v_t^i)_{i \in \mathcal{V}}.$ For convenience, we define the total weight of vertex $i$ and the fraction of that total weight stored in state $U$ to be
\begin{align*}
g_t^i &:= u_t^i + v_t^i\\
x_t^i &:= \frac{u_t^i}{g_t^i},
\end{align*}
respectively. We consolidate notation with $\vec{g}_t$ and $\vec{x}_t,$ similarly to the above. We enforce the initial conditions $(\vec{u}_0, \vec{v}_0)$ to be such that $u_0^i + v_0^i =: g_0^i > 0$ for all $i$, and we define $\vec{\gamma}_t$ to be a vector with $\vec{\gamma}_t^i := \frac{1}{g_t^i}$ for later convenience.

The dynamics are as follows: at every timestep $t \geq 1$ choose a random edge $e = (i,j) \in \mathcal{E}$. Increment (only) each the two $g$ values:
\begin{align*}
g_t^i &= g_{t-1}^i + 1 \\
g_t^j &= g_{t-1}^j + 1
\end{align*}
with all other $g_t^k = g_{t-1}^k$ unchanged for $k \notin \{i,j\}$.
Define:
\[
p^{e}_{t} := \frac{u_{t}^i + u_{t}^j}{g^i_{t} + g^j_{t}} = \frac{x_{t}^i g_{t}^i + x_{t}^j g_{t}^j}{g^i_{t} + g^j_{t}},
\]
as the pooled opinion of agents $i$ and $j$, and let $p^{e}_{t-1}$ give the probability of $i$ and $j$ agreeing on state $U$ at time $t$, given that edge $e$ is chosen at time $t$. If the chosen $i$ and $j$ agree on state $U,$ increment each of their $u$ values:
\begin{align*}
u_t^i &= u_{t-1}^i + 1 \\
u_t^j &= u_{t-1}^j + 1.
\end{align*}
If they agree on opinion $V$, do not alter the $u$-values. All other $u_t^k = u_{t-1}^k$ for $k \notin \{i,j\}$ remain unchanged regardless of the outcome of the conversation along edge $e$.

We show that the dynamics of the system are governed by a discrete, stochastic version of the heat equation, with an "influence matrix" $L$ driving the propagation of opinions. The influence matrix acts like the graph Laplacian, but gives higher weight to vertices which have high degree, which have more conversations on average and therefore develop strong opinions more rapidly. Similarly to the graph Laplacian, the influence matrix has right-eigenvector $\vec{1}$ (the $|\mathcal{V}|$-dimensional vector with each component equal to $1$); let $a_t$ be the coordinate of $\vec{x}_t$ corresponding to $\vec{1}$ with respect to a fixed, generalized eigenbasis of $L$ (discussed below). We will refer to $a_t$ as the \textit{consensus coordinate}.

The goal of this paper is to prove the following theorem, which states that a consensus of opinion is reached in the long-time limit.
\begin{theorem}\label{main theorem}
There exists a random scalar $0 \leq a_\infty \leq 1$ such that
\[
\mathbb{E}[\|\vec{x}_t - a_\infty \vec{1}\|^2] \xrightarrow{t \rightarrow \infty} 0
\]
\end{theorem}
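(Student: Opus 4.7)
The plan is to decompose $\vec x_t$ into a consensus component along $\vec 1$ and a complementary fluctuation, showing that the fluctuation vanishes in $L^2$ while the consensus component converges to $a_\infty$ in $L^2$. First I would unpack the stochastic heat equation announced in the introduction. Conditionally on edge $e=(i,j)$ being chosen at time $t$, the identity
\[
p^e_{t-1} - x^i_{t-1} \;=\; \frac{g^j_{t-1}}{g^i_{t-1}+g^j_{t-1}}\,(x^j_{t-1} - x^i_{t-1})
\]
expresses the drift of $x^i_t$ as a nonnegative combination of neighbor differences; summing over edges packages it as $-L_t\,\vec x_{t-1}$ with $L_t\vec 1 = 0$. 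This yields the recursion
\[
\vec x_t - \vec x_{t-1} \;=\; -L_t\,\vec x_{t-1} + \vec M_t,
\]
where $\vec M_t$ is a martingale difference. The increments of $x^i_t$ scale as $1/(g^i_{t-1}+1)$, so once an easy concentration step gives $g^i_t \sim (\deg(i)/|\mathcal E|)\,t$, we obtain $\|L_t\|_{\mathrm{op}} = O(1/t)$ and $\mathbb E[\|\vec M_t\|^2 \mid \mathcal F_{t-1}] = O(1/t^2)$.

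Second, I would control the fluctuation $\vec z_t := P\,\vec x_t$, where $P$ is the orthogonal projection onto $\vec 1^\perp$. Applying $P$ to the recursion and using $L_t \vec 1 = 0$ yields $\vec z_t = (I - PL_tP)\,\vec z_{t-1} + P\vec M_t$. A spectral-gap bound $\mathrm{Re}\,\sigma(L_t)\setminus\{0\} \geq c/t$ (established in parallel, see obstacle below) gives the contraction $\|(I - PL_tP)\,\vec z_{t-1}\|^2 \leq (1 - c/t)\|\vec z_{t-1}\|^2$, and together with $\mathbb E\|P\vec M_t\|^2 = O(1/t^2)$, unrolling the resulting recursion $\mathbb E\|\vec z_t\|^2 \leq (1-c/t)\mathbb E\|\vec z_{t-1}\|^2 + O(1/t^2)$ produces $\mathbb E\|\vec z_t\|^2 = O(1/t) \to 0$.

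Third, set $\bar a_t := \vec 1^\top \vec x_t/|\mathcal V|$. Its recursion is $\bar a_t - \bar a_{t-1} = -\vec 1^\top L_t \vec z_{t-1}/|\mathcal V| + \vec 1^\top \vec M_t/|\mathcal V|$; the drift is $O(\|\vec z_{t-1}\|/t)$ and hence summable in $L^2$ by the previous step, while the martingale part has summable conditional variance $O(1/t^2)$. Combined with boundedness $\bar a_t \in [0,1]$, this gives $\bar a_t \to a_\infty$ a.s.\ and in $L^2$. Since the paper's $a_t = \vec w^\top \vec x_t$ (with $\vec w$ the left null vector of $L$ normalized by $\vec w^\top \vec 1 = 1$) satisfies $a_t - \bar a_t = (\vec w - \vec 1/|\mathcal V|)^\top \vec z_t$, which also vanishes in $L^2$, we conclude $a_t \to a_\infty$. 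The bound $\|\vec x_t - a_\infty \vec 1\|^2 \leq 2\|\vec z_t\|^2 + 2|\mathcal V|\,(\bar a_t - a_\infty)^2$ and its expectation then finish the proof.

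The main obstacle is the spectral-gap estimate $\mathrm{Re}\,\sigma(L_t)\setminus\{0\} \geq c/t$. The operator $L_t$ is random, time-dependent, and not self-adjoint (the "influence matrix" weights vertices asymmetrically by degree), so one must first show by concentration that $g^i_t/t \to \deg(i)/|\mathcal E|$ and thereby that $tL_t$ is well-approximated by a fixed limit $L$; then one argues that the nonzero spectrum of $L$ lies in the open right half-plane. The paper's use of a \emph{generalized} eigenbasis suggests $L$ may carry nontrivial Jordan blocks, in which case the contraction must be measured in a norm adapted to that Jordan structure; this costs logarithmic factors but does not alter the qualitative $o(1)$ decay of $\mathbb E\|\vec z_t\|^2$.
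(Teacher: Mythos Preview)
Your overall architecture (split into consensus coordinate plus disagreement, show the former converges and the latter decays) matches the paper's, but the heart of your disagreement argument has a genuine gap.

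The random one-step operator $L_t$ (in the paper's notation, $\vec\gamma_{t+1}^T\circ L_t$) is \emph{not} close to $\tfrac{1}{t}L$; only its conditional expectation $\mathbb E_t[\vec\gamma_{t+1}^T\circ L_t]$ is. At each step a single edge is drawn, so $L_t$ has exactly four nonzero entries and acts nontrivially on only two coordinates. Consequently $I-PL_tP$ restricted to $\vec 1^\perp$ fixes a subspace of codimension at most two, and there is no inequality of the form $\|(I-PL_tP)\vec z_{t-1}\|^2\le(1-c/t)\|\vec z_{t-1}\|^2$: for $|\mathcal V|>2$ the left-hand side equals $\|\vec z_{t-1}\|^2$ whenever $\vec z_{t-1}$ is supported off the chosen edge. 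Your ``obstacle'' paragraph diagnoses the difficulty as a spectral-gap estimate for $L_t$, but the real obstruction is structural sparsity, not lack of self-adjointness, and no concentration argument for $g^i_t$ can repair it at the level of a single step.

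The paper's workaround is precisely the missing idea: it groups $\tau=\lceil t^{1/4}\rceil$ consecutive factors $\Lambda_k=I+\vec\gamma_{k+1}^T\circ L_k$ into blocks $H_{k,t}$, and shows via a Ces\`aro/law-of-large-numbers argument on the edge selections (Lemmas~4.3--4.4, conditioned on ``good events'' $A_{k,t}\cap B_{k,t}$) that $H_{k,t}$ is close to the deterministic $A_k=I+\tfrac1k L$. Only \emph{after} this averaging does Perron--Frobenius supply a usable contraction $|\lambda_i^{(k)}|\le 1-\lambda/k$ on the non-consensus eigenspace, and the product over $k$ is then controlled by Gautschi's inequality. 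Your recursion $\mathbb E\|\vec z_t\|^2\le(1-c/t)\mathbb E\|\vec z_{t-1}\|^2+O(1/t^2)$ would need to be replaced by an estimate on $\|QP^{-1}[\Pi_{k=j}^t\Lambda_k]P\|$ over long stretches, which is exactly the content of Lemma~4.7.

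A minor point: the paper proves (Lemma~2.4) that $L$ is similar to a symmetric matrix via conjugation by the degree matrix, hence diagonalizable; the phrase ``generalized eigenbasis'' in the introduction is loose, and no Jordan blocks arise.
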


\subsection{Related Work}
A similar class of frameworks for opinion propagation, called voter models, also feature randomly selected pairs of agents exchanging opinions. For example, in the Deffuant model, pairs of neighbors interact only when their opinions are within some threshold of one another, with consensus and/or polarization being driven by threshold size (\cite{r0}). Another example of a voter model is the Hegselmann-Krause model, in which an agent is randomly selected to have their opinion replaced by some determinstic function of their neighbors' opinions (\cite{r4}). The model presented in this paper could perhaps be considered a stochastic voter model (stochastic in the sense that outcomes of conversations are random). A unique property of this model, however, is the pooled-experience nature of conversations, resulting in influences between agents which are random and dynamic, but which tend towards a graph-dependent object (the influence matrix). It should also be noted that this model features continuous opinions ($x_t^i \in [0,1]$) with discrete actions (agents agree on either $U$ or $V$); different combinations of opinion and action spaces are featured throughout the literature.

This model can also be compared to the DeGroot model for learning, in which updates are made according to some constant 'trust matrix' $T$: $\vec{x}_{t+1} = T \vec{x}_{t}$ (\cite{r10}). The trust matrix can represent how much each agent trusts their neighbors as well as themself, giving a weighted average of their neighbors' beliefs and their own prior opinions. Other, similar models of opinion propagation have been studied, considering the effects of agents' self-confidence and network topology on long-term behavior (\cite{r1, r2, r3, r9}). Yet another related class of models for opinion propagation are 'probabilistic fuzzy models' which include agents' perceptions of some exogenous, albeit 'fuzzy' (the exact state is unclear) variables (\cite{r11}). We finally note that much of the literature on opinion propagation focuses on simulation-based studies, while rigorous proofs are less common.

\subsection{Outline of Paper}
The rest of the paper proceeds as follows. In Section 2, we derive the fact that the behavior of $x_t$ is governed by a discrete-time stochastic heat equation, and give some important properties of the (stochastic) Laplacian operator driving the diffusion. In Section 3, we prove convergence of the consensus coordinate of $\vec{x}_t$, and in section 4, we prove the decay of $\vec{x}_t - a_t \vec{1}$ (the \textit{disagreement component}). In Section 5, we give a proof of Theorem \ref{main theorem}, and in Section 6, we provide a conjecture that may lead to future work.

\section{Stochastic Heat Equation}
\subsection{Preliminaries}
At each timestep, an edge is randomly selected to host a 'conversation' between its two vertices. The following heuristic is equivalent and useful: let all edges have conversations, and uniformly at random select one edge to actually contribute to the dynamics.

Let $\Omega_t = \{\omega_t^{e}\}_{e \in \mathcal{E}}\in \{0,1\}^{\mathcal{E}}$ be the results of all conversations occuring at timestep $t$, with $\omega_t^e = 1$ if opinion $U$ is agreed on, and $0$ otherwise. Similarly, let $\psi_t \in \mathcal{E}$ be the edge chosen at time $t$, and let $S_t^e = \mathbbm{1}_{\{\psi_t = e\}}$. Define the filtrations
\begin{align*}
\mathcal{H}_1 \subset \mathcal{H}_2 \subset \mathcal{H}_3 \subset ...\\
\mathcal{G}_1 \subset \mathcal{G}_2 \subset \mathcal{G}_3 \subset ...\\
\mathcal{F}_1 \subset \mathcal{F}_2 \subset \mathcal{F}_3 \subset ...,
\end{align*}

where $\mathcal{H}_t = \sigma(\Omega_t,\Omega_{t-1},...,\Omega_1)$ corresponds to the information received up to and immediately after discussions in the $t^{\text{th}}$ round, $\mathcal{G}_t = \sigma(\psi_t, \psi_{t-1},...,\psi_1)$ corresponds to the information received given all of the chosen edges up to and including time $t$, and let $\mathcal{F}_t = \sigma(\mathcal{H}_t, \mathcal{G}_t,...,\mathcal{H}_1, \mathcal{G}_1)$. Note that $\Omega_t \in m(\mathcal{H}_t)$, but $\Omega_{t+1} \notin m(\mathcal{H}_{t})$. Since $\psi_t$ does not care about the previous edges chosen or the results of any concurrent or previous conversations, we let $\sigma(\psi_t)$ be independent of $\sigma(\mathcal{F}_{t-1} \cup \mathcal{H}_t)$. Furthermore, for $e \neq f,$ we let $\omega_t^e$ and $\omega_t^f$ be conditionally independent given $\mathcal{F}_{t-1}$ (they are not fully independent, since they are both affected by the history of conversations over the network). Define the full sample space and sigma-algebra to be
\begin{align*}
\Omega \times \Psi &:= \big(\{U,V\}^\mathcal{E}\big)^\mathbb{N} \times \mathcal{E}^\mathbb{N} \\
\mathcal{F} &:=\sigma \big( \cup_{t \in \mathbb{N}} \mathcal{F}_t\big).
\end{align*}
Using the notation established above, we have the following update rule for $g$ and $u$:
\begin{align*}
g_{t+1}^i - g_t^i &:= \sum_{e \rightarrowtail i} S_{t+1}^e\\
u_{t+1}^i-u_t^i &:= \sum_{e \rightarrowtail i} S_{t+1}^e \omega^e_{t+1},
\end{align*}
where $e\rightarrowtail i$ means that edge $e$ is incident to vertex $i$, and we are summing over all such edges.

Decompose $\omega_t^e$ into a $\mathcal{F}_{t-1}$-measurable random variable and a mean-$0$ $\sigma(\mathcal{F}_{t-1}, \mathcal{H}_{t})$-measurable fluctuation:
\[
\omega_t^{e} = p_{t-1}^e + \tilde{w}_t^e,
\]
so that
\[
\tilde{w}_{t}^{e} = 
\begin{cases}
1 -  p_{t-1}^{e} & \text{with probability } p_{t-1}^{e}\\
-p_{t-1}^e & \text{with probability } 1 - p_{t-1}^{e}.
\end{cases}
\]

Let $w_t^i := \sum_{e \rightarrowtail i} \tilde{w}_t^e S_t^e$. From here forward, we will use the notation $\mathbb{E}_t[Z] := \mathbb{E}[Z|\mathcal{F}_t]$ to represent conditional expectation with respect to the sigma-algebra $\mathcal{F}_t$. Note that 
\begin{align*}
\mathbb{E}_{t-1}[w_t^i] &= \sum_{e \rightarrowtail i} \mathbb{E}[ \tilde{w}_t^e S_{t}^e|\mathcal{F}_{t-1} ] \\
&= \sum_{e \rightarrowtail i} \mathbb{E}\Big[  \mathbb{E}[ \tilde{w}_t^e S_{t}^e|\sigma \big(\mathcal{F}_{t-1} \cup \mathcal{H}_t \big) ]|\mathcal{F}_{t-1}\Big ]\\
&=\sum_{e \rightarrowtail i} \mathbb{E}\Big[ \tilde{w}_t^e \mathbb{E}[  S_{t}^e|\sigma \big(\mathcal{F}_{t-1} \cup  \mathcal{H}_t\big) ]|\mathcal{F}_{t-1}\Big ]\\
&= \frac{1}{|\mathcal{E}|}\sum_{e \rightarrowtail i} \mathbb{E}\Big[ \tilde{w}_t^e|\mathcal{F}_{t-1}\Big ]\\
&= 0,
\end{align*}
where in the second, third, and fourth equalities we've used the tower property, 'taken out what was known', and used that $S^e_t \perp \sigma(\mathcal{F}_{t-1} \cup \mathcal{H}_t),$ respectively.

For later convenience, we present here a consolidated list of definitions of important quantities, and the earliest sigma-algebra $\mathcal{F}$ with respect to which they are measurable:
\begin{defi}[Important Quantities] \leavevmode \\
\begin{itemize}
\item \textbf{Total weight}: $g_t^i \in m\mathcal{F}_t$, $\gamma_t^i = \frac{1}{g_t^i}$
\item \textbf{Weight on opinion $U$}: $u_t^i \in m\mathcal{F}_t$
\item \textbf{Proportion of weight on $U$}: $x_t^i = \frac{u_t^i}{g_t^i}\in m \mathcal{F}_t$
\item \textbf{Initial conditions}: $u_0^i,g_0^i > 0$
\item \textbf{Mutual weight on $U$}: $p_t^{e} = \frac{u_t^i+u_t^j}{g_t^i + g_t^j} \in m\mathcal{F}_t,$ where $e = (i,j)$
\item \textbf{Mean-0 fluctuation of conversation result}: $\tilde{w}_t^{e} \in m\mathcal{F}_t$ 
\item \textbf{Result of conversation}: $\omega_t^{e} = p_{t-1}^{e} + \tilde{w}_t^{e} \in m\mathcal{F}_t$
\item \textbf{Edge to play}: $\psi_t \in m \mathcal{F}_t$, $S_t^e = \mathbbm{1}_{\{\psi_t = e\}} \in m\mathcal{F}_t$
\end{itemize}

\end{defi}

We now define a Hadamard (elementwise) product between a vector and a matrix. Unless otherwise noted, the symbol $\|\cdot \|$ will refer to the Euclidean norm for vectors, and the operator norm between Euclidean vector spaces for matrices. We carry this convention through the end of the paper.
\begin{defi}[Hadamard Product]
The left-Hadamard product between an $m$-dimensional row vector $b$ and $(m \times n)$ matrix $A$ is a $(m \times n)$ matrix with entries given as follows:
\[
(b \circ_L A)^{ij} := b^i A^{ij}.
\]
Similarly, the right-Hadamard product between an $n$-dimensional column vector and  $(m \times n)$ matrix $A$ is an $(m \times n)$ matrix with entries as follows:
\[
(A \circ_R b)^{ij} = A^{ij} b^j.
\]
\end{defi}
We will omit subscripts $L$ and $R$ when it is clear from the context what is meant. It can readily be shown that the Euclidean norms are sub-multiplicative with respect to the right-Hadamard product. For an $m\times n$ matrix $A$ and an $n$-column vector $b$,
\[
\|A \circ b\| \leq \|A\| \|b\|.
\]
and similarly for left-products. Another important property of Hadamard multiplication is its associativity with matrix multiplication. For an $(m \times n)$ matrix $A_1$, an $n \times p$ matrix $A_2,$ and an $n-$column vector $b$,
\[
A_1 (b^T \circ_L A_2) = (A_1 \circ_R b) A_2.
\]

\subsection{Deriving the Stochastic Heat Equation}
Fix an arbitrary vertex $i \in V$. We now consider the quantity $u_{t+1}^i-u_t^i$, which represents the increase in the propensity of vertex $i$ to play move $u$ after timestep $t+1$.
\[
u_{t+1}^i - u_t^i =  \sum_{e \rightarrowtail i} S^e_{t+1} \omega_{t+1}^{e}= \sum_{e \rightarrowtail i}S^e_{t+1}   \Big( p_{t}^{e} + \tilde{w}_{t+1}^{e} \Big)
\]
We use the equation above to write down the change in $x^i$ between timesteps $t$ and $t+1$:
\begin{align*}
x_{t+1}^i-x_t^i &= \frac{u_{t+1}^i}{g_{t+1}^i} - \frac{u_{t}^i}{g_{t}^i} = \frac{1}{g_{t+1}^i} \Big( u^i_{t+1}-u^i_t - \frac{g^i_{t+1}-g^i_t}{g^i_t}u^i_t \Big) \\
&=  \frac{1}{g_{t+1}^i} \Big(  \sum_{e \rightarrowtail i}S^e_{t+1}   \Big( p_{t}^{e} + \tilde{w}_{t+1}^{e} \Big) - (g^i_{t+1}-g^i_t) x^i_t\Big) \\
&=  \frac{ w_{t+1}^i}{g_{t+1}^i} +\frac{1}{g_{t+1}^i} \Big( \Big[\sum_{j\sim i} S_{t+1}^{ij}  \frac{x_{t}^i g_t^i + g_t^j x_{t}^j}{g_t^i + g_t^j}\Big] -  (g^i_{t+1}-g^i_t) x^i_t\Big) \\
&=  \frac{1}{g_{t+1}^i} \big(  w_{t+1}^i + \big(L_t \vec{x}_t\big)^i \big),
\end{align*}
where $L_t$ is defined as follows:
\begin{defi}[The Diffusion Matrix]

The diffusion matrix $L_t\in m\mathcal{F}_{t+1}$ is a $|\mathcal{V}| \times |\mathcal{V}|$ matrix with entries:
\begin{align*}
L_{t}^{ij} &= \begin{cases}
S_{t+1}^{ij}\frac{g_t^j}{g_t^i + g_t^j} & i \neq j, i \sim j \\
-\sum_{j \sim i}S_{t+1}^{ij}\frac{g_t^j}{g_t^i + g_t^j} & i = j \\
0 & \text{else}
\end{cases}
\end{align*}
We also define $\Lambda_t \in m \mathcal{F}_{t+1}$ to be a $|\mathcal{V}| \times |\mathcal{V}|$ matrix as follows:
\begin{align*}
\Lambda_t &= I + \gamma_{t+1}^T \circ L_t.
\end{align*}
\end{defi}
Note that $L_t$ will have exactly four non-zero entries, and takes the following form:
\[
\begin{bmatrix}
0&\cdots&\cdots&\cdots&\cdots&\cdots&0 \\
\vdots&\ddots&\cdots&\cdots&\cdots&\cdots&\vdots \\
\vdots&\cdots&-a&\cdots&a&\cdots&\vdots \\
\vdots&\cdots&\cdots&\ddots&\cdots&\cdots&\vdots \\
\vdots&\cdots&b&\cdots&-b&\cdots&\vdots \\
\vdots&\cdots&\cdots&\cdots&\cdots&\ddots&\vdots \\
0&\cdots&\cdots&\cdots&\cdots&\cdots&0
\end{bmatrix}
\]
for some $a,b > 0$.

Although $L_t$ is sparse, its expectation given the previous timestep, $\mathbb{E}_t[L_t]$, is worthy of mention. It represents the aggregate effects after many rounds of conversations:
\begin{align*}
\mathbb{E}_t[L_{t}]^{ij} &= \begin{cases}
\frac{1}{\mathcal{E}} \frac{g_t^j}{g_t^i + g_t^j} & i \neq j, i \sim j \\
-\frac{1}{\mathcal{E}} \sum_{j \sim i}\frac{g_t^j}{g_t^i + g_t^j} & i = j \\
0 & \text{else}.
\end{cases}
\end{align*}

We also note that each $g_t^i - g_0^i$ is a binomial random variable with mean equal to $t \frac{d_i}{E}$, where $d_i$ is the degree of vertex $i$. We thus expect the leading order terms of $\mathbb{E}_t[L_t]$ to look like $\frac{1}{\mathcal{E}}$ times the following \textit{influence matrix}, a graph dependent constant, defined below. 
BREAK
\begin{defi}[Influence Matrix]
The influence matrix $L$ is a $|\mathcal{V}| \times |\mathcal{V}|$ matrix with entries:
\begin{align*}
L^{ij} &= \begin{cases}
\frac{d_j/d_i}{d_i+d_j } & i \neq j, i \sim j \\
-\sum_{j \sim i} \frac{d_j/d_i}{d_i+d_j } & i = j \\
0 & \text{else}.
\end{cases}\\
\end{align*}
We also define $A_t$ to be a $|\mathcal{V}| \times |\mathcal{V}|$ matrix as follows:
\begin{align*}
A_t &= I + \frac{1}{t}L
\end{align*}
\end{defi}
The influence matrix corresponds to the graph Laplacian matrix for the weighted, directed graph $\mathcal{I}(G) := \big(\mathcal{V},\mathcal{I}(\mathcal{E})\big)$, where $(i,j) \in \mathcal{E} \iff (i,j),(j,i) \in \mathcal{I}(\mathcal{E})$, and the edge weight from $j$ to $i$ is defined to be $L^{ij}$ (see Figure 1). Note that edge weights from $j$ to $i$ are high when $d_j$ is large relative to $d_i$. We think of $j$ as having more 'influence' than $i$ in this case.

\begin{figure}
\centering
\begin{subfigure}{0.7\linewidth}
\begin{center}
\begin{tikzpicture}[main node/.style={circle,fill=gray!20,draw,minimum size=.8cm,inner sep=0pt}]
     \node[main node] (1) {$1$};
    \node[main node] (2) [right = 1.5cm of 1]  {$2$};
    \node[main node] (3) [right = 1.5cm of 2] {$3$};

    \path[draw,very thick]
    (1) edge node {} (2)
    (2) edge node {} (3);
\end{tikzpicture}
\end{center}
\caption{$G$}
\end{subfigure}
\vspace{.5cm}
\vfill
\begin{subfigure}{0.5\linewidth}
\begin{center}
\begin{tikzpicture}[->,main node/.style={circle,fill=gray!20,draw,minimum size=.8cm,inner sep=0pt}]
    \node[main node] (1) {$1$};
    \node[main node] (2) [right = 1.5cm of 1]  {$2$};
    \node[main node] (3) [right = 1.5cm of 2] {$3$};

    \path[draw,very thick]
   (2) edge[bend left] node [below]{$\frac{2}{3}$} (1)
   (1) edge[bend left] node [above]{$\frac{1}{6}$} (2)
   (3) edge[bend right] node [above]{$\frac{1}{6}$} (2)
     (2) edge[bend right] node [below]{$\frac{2}{3}$} (3);
\end{tikzpicture}
\end{center}
\caption{$\mathcal{I}(G)$}
\end{subfigure}
\caption{A graph $G$ and the directed, weighted graph $\mathcal{I}(G)$ associated to $G$}
\end{figure}
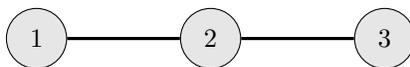
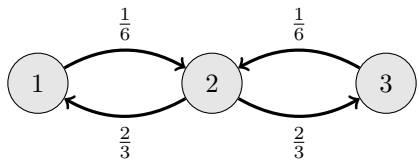

With these definitions in place, we present the Stochastic Heat Equation (abbreviated SHE), derived above:
\begin{proposition}[Stochastic Heat Equation]{\label{SHE}}
We present the differential form of the Stochastic Heat Equation (SHE):
\[
\partial_t x_t =  \gamma_t^T \circ (L_{t-1}\vec{x}_{t-1} + W_t),
\]
and its solution:
\[
\vec{x}_t = \sum_{j=0}^t [\Pi_{k=j}^{t-1} \Lambda_k ] (\gamma_j^T \circ W_j)
\]
where $\partial_t \vec{x}_t := \vec{x}_t - \vec{x}_{t-1}$ and $\vec{W}_0 := \vec{u}_0.$
\end{proposition}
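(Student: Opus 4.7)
The differential form is essentially already derived in the calculation preceding the proposition: that calculation shows componentwise that
\[
x_{t+1}^i - x_t^i = \frac{1}{g_{t+1}^i}\bigl(w_{t+1}^i + (L_t \vec{x}_t)^i\bigr).
\]
To close the first claim I would simply package these $|\mathcal{V}|$ scalar identities into a single vector equation, recognize the common prefactor $1/g_{t+1}^i$ as the $i$-th component of $\gamma_{t+1}$, and read off the right-hand side as a left-Hadamard product. Reindexing $t+1\mapsto t$ then yields
\[
\partial_t \vec{x}_t = \gamma_t^T \circ (L_{t-1} \vec{x}_{t-1} + W_t),
\]
where $W_t$ is the vector assembled from the scalars $w_t^i$.

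For the closed-form solution I would first rearrange the SHE into the one-step affine recursion
\[
\vec{x}_t = (I + \gamma_t^T \circ L_{t-1})\vec{x}_{t-1} + \gamma_t^T \circ W_t = \Lambda_{t-1}\vec{x}_{t-1} + \gamma_t^T \circ W_t,
\]
using the definition of $\Lambda_{t-1}$ together with the elementary identity $\gamma_t^T \circ (L_{t-1}\vec{x}_{t-1}) = (\gamma_t^T \circ L_{t-1})\vec{x}_{t-1}$ (which is immediate from the definitions). Induction on $t$ then produces the stated sum: assuming the formula holds at time $t-1$, substituting into the recursion and distributing $\Lambda_{t-1}$ through the sum extends each product $\prod_{k=j}^{t-2}\Lambda_k$ to $\prod_{k=j}^{t-1}\Lambda_k$, while the new term $\gamma_t^T \circ W_t$ fills in the $j=t$ summand with empty product equal to $I$. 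The base case $t=0$ is exactly what the convention $\vec{W}_0 := \vec{u}_0$ is designed to handle: the sum collapses to the single $j=0$ term with empty product, giving $\gamma_0^T \circ \vec{u}_0$, whose $i$-th entry is $u_0^i/g_0^i = x_0^i$, as required.

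\textbf{Main obstacle.} There is no substantive obstacle; both parts reduce to bookkeeping once the per-vertex derivation preceding the statement is granted. The one point needing attention is the boundary convention $\vec{W}_0 := \vec{u}_0$, chosen precisely so that the deterministic initial condition fits into the same template as the mean-zero conversation fluctuations $\gamma_j^T \circ W_j$ for $j \geq 1$ and yields a single uniform closed-form expression. The Hadamard/matrix associativity identity recorded earlier is not needed at this stage; it will presumably be invoked when the solution formula is expanded and compared with the deterministic semigroup generated by $L$ in later sections.
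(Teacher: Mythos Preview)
Your proposal is correct and matches the paper's approach: the differential form is exactly the vector packaging of the componentwise computation carried out just before the proposition, and the paper does not supply a separate argument for the closed-form solution, so your induction on $t$ from the affine recursion $\vec{x}_t = \Lambda_{t-1}\vec{x}_{t-1} + \gamma_t^T \circ W_t$ is the natural (and only reasonable) way to fill that in. Your handling of the base case via the convention $\vec{W}_0 := \vec{u}_0$ is also exactly the intended reading.
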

Throughout the paper, we will use the convention that $\Pi$- products of matrices have older matrices to the right, for example:
\[
\Pi_{k=1}^t \Lambda_k = \Lambda_k \Lambda_{k-1} \cdot \cdot \cdot \Lambda_2 \Lambda_1.
\]

As intuition may suggest, the steady-state solution to the above heat equation is consensus: all $x_t^i$ will converge to the same (random) constant. At the heart of this idea is the Perron-Frobenius Theorem, which says that the eigenvector $\vec{1}$ of $I + L$ which represents consensus has strictly dominant eigenvalue $1$. We first state the Perron-Frobenius theorem for nonnegative matrices (Lemma \ref{pfgen}), along with another necessary technical ingredient (Lemma \ref{primitive}).

\begin{lemma}{\cite{r13, r14}}{\label{pfgen}}
Let $M$ be a square, nonnegative, irreducible, primitive matrix (i.e., there exists $k >0$ such that $M^k > 0$ elementwise) with spectral radius $\rho$. Then the following hold:
\begin{itemize}
\item $\rho$ is an algebraically simple eigenvalue of $M$, and the corresponding normalized eigenvector $\vec{v}$ is unique and positive
\item Any nonnegative eigenvecor of $M$ is a multiple of $\vec{v}$
\item All other eigenvalues of $M$ have absolute value strictly smaller than $\rho$
\end{itemize}
\end{lemma}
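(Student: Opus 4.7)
The plan is to prove the three claims via a Brouwer fixed-point argument paired with Collatz-Wielandt comparisons, with primitivity reserved for the strict-dominance and simplicity steps. The ingredients naturally split into a ``Perron part'' (existence, positivity, identification with $\rho$), which needs only nonnegativity and irreducibility, and a ``primitivity part'' (strict spectral dominance, simplicity, and uniqueness among nonnegative vectors).

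For existence, apply Brouwer's fixed-point theorem to the continuous self-map $T(x) = Mx / (\mathbf{1}^T Mx)$ on the simplex $\Delta = \{x \geq 0 : \mathbf{1}^T x = 1\}$; this is well-defined because irreducibility rules out zero columns, so $\mathbf{1}^T Mx > 0$ on $\Delta$. The fixed point $v$ satisfies $Mv = \lambda v$ with $\lambda = \mathbf{1}^T Mv > 0$, and primitivity upgrades $v \geq 0$ to $v > 0$ via $v = \lambda^{-k} M^k v$ for some $k$ with $M^k > 0$. Running the same construction on $M^T$ produces a strictly positive left eigenvector $\tilde{v}$ with $\tilde{v}^T M = \lambda \tilde{v}^T$ (the two eigenvalues agree since $\lambda \tilde{v}^T v = \tilde{v}^T Mv$ and $\tilde{v}^T v > 0$). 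For any other eigenpair $(\mu, x)$ of $M$, the componentwise bound $M|x| \geq |\mu| \cdot |x|$ paired against $\tilde{v}^T$ yields $|\mu| \tilde{v}^T |x| \leq \lambda \tilde{v}^T |x|$ with $\tilde{v}^T |x| > 0$, so $|\mu| \leq \lambda$ and hence $\lambda = \rho$.

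For geometric simplicity and uniqueness: any nonnegative $\rho$-eigenvector is in fact strictly positive (apply $M^k$ with $M^k > 0$), so if $w$ is any $\rho$-eigenvector, splitting into real and imaginary parts reduces to the real case, and then for a suitable $t^{*} \in \mathbb{R}$ the vector $v + t^{*} w$ is a nonnegative $\rho$-eigenvector with a zero entry, forcing $v + t^{*} w = 0$ and $w$ proportional to $v$. This gives geometric simplicity and, combined with the pairing identity $(\mu - \rho)\tilde{v}^T w = 0$, shows any nonnegative eigenvector equals a multiple of $v$. Algebraic simplicity is handled by a separate pairing argument: if $(M - \rho I)^2 w = 0$ but $y := (M - \rho I) w \neq 0$, then $y$ lies in the $\rho$-eigenspace and hence is a nonzero multiple of $v$, yet $\tilde{v}^T y = \tilde{v}^T (M - \rho I) w = 0$, contradicting $\tilde{v}^T v > 0$.

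The main obstacle is the strict-dominance claim $|\mu| < \rho$ for every $\mu \neq \rho$, since irreducibility alone is insufficient (cyclic permutation matrices have multiple spectral-radius eigenvalues distributed on the circle of radius $\rho$). Primitivity resolves this by passing to the strictly positive matrix $M^k$: for positive matrices the Collatz-Wielandt inequality $M^k |x| \geq |\mu|^k |x|$ becomes strict unless the nonzero entries of $x$ all share a common phase, and analyzing equality in the componentwise triangle inequality for $(M^k x)_i = \mu^k x_i$ forces $|x|$ to be proportional to $v$ and $\mu^k = \rho^k$; algebraic simplicity of $\rho^k$ as an eigenvalue of $M^k$ then forces the underlying $M$-eigenvector to be a multiple of $v$, so $\mu = \rho$.
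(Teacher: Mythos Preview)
The paper does not prove this lemma; it is stated as a classical result and attributed to the cited references, so there is no in-paper proof to compare against. Your proposal is a correct and standard route to Perron--Frobenius (Brouwer on the simplex for existence, left/right positive eigenvector pairing for $\rho=\lambda$ and for algebraic simplicity, and passage to the strictly positive power $M^k$ for strict dominance).

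One step is phrased in a way that could be tightened. In the strict-dominance argument you write that ``algebraic simplicity of $\rho^k$ as an eigenvalue of $M^k$'' forces the $M$-eigenvector to be a multiple of $v$. At that point you have only established algebraic simplicity of $\rho$ for $M$, not of $\rho^k$ for $M^k$, and the latter does not follow from the former in general (indeed, the failure is exactly the peripheral-eigenvalue phenomenon you are trying to exclude). The clean fix is already implicit in what you wrote just before: once $A:=M^k>0$ and $A|x|=\rho^k|x|$, equality in the triangle inequality $|\sum_j A_{ij}x_j|=\sum_j A_{ij}|x_j|$ together with $A_{ij}>0$ forces all $x_j$ to share a common phase, so $x=e^{i\theta}|x|$ is a scalar multiple of $v$; then $Mx=\mu x$ and $Mv=\rho v$ give $\mu=\rho$ directly. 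No appeal to simplicity of $\rho^k$ for $M^k$ is needed. With that adjustment the argument is complete.
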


\begin{lemma}{\cite{r15}}{\label{primitive}}
Let $M$ be an $n\times n$ matrix, and define $\Gamma(M)$ to be a digraph with vertex set $\mathcal{V} = \{1,...,n\}$ and directed edge set $\mathcal{E} = \{(i,j) : M_{ij} \neq 0 \}$. If $\Gamma(M)$ is strongly connected, and every vertex $i$ of $\Gamma(M)$ has a self-loop, then $M$ is primitive.
\end{lemma}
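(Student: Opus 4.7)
The plan is to translate primitivity into a combinatorial statement about walks in the digraph $\Gamma(M)$. Since the definition of primitivity in Lemma \ref{pfgen} requires $M^k > 0$ elementwise, one implicitly needs $M$ to be nonnegative (otherwise positive and negative contributions from distinct walks could cancel); assuming $M \geq 0$, expansion of the matrix power yields
\[
(M^k)_{ij} = \sum_{\pi} \prod_{(a,b)\in \pi} M_{ab},
\]
where the sum runs over walks $\pi = (i = v_0, v_1, \ldots, v_k = j)$ of length $k$ in $\Gamma(M)$. Every factor $M_{ab}$ with $(a,b) \in \Gamma(M)$ is strictly positive by definition, and all terms in the sum are nonnegative, so $(M^k)_{ij} > 0$ if and only if at least one walk of length exactly $k$ from $i$ to $j$ exists in $\Gamma(M)$. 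It therefore suffices to exhibit a single $k$ such that walks of length exactly $k$ exist between every ordered pair of vertices.

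Two observations deliver such a $k$. First, strong connectivity of $\Gamma(M)$ on $n$ vertices guarantees that for any ordered pair $(i,j)$ there is a directed walk from $i$ to $j$ of length at most $n-1$ (take a shortest walk, which is necessarily simple; for $i = j$ use the self-loop, of length $1$). Second, the self-loop hypothesis allows arbitrary padding: given any walk of length $\ell$ from $i$ to $j$, inserting $k - \ell$ copies of the self-loop at $j$ produces a walk of length $k$ from $i$ to $j$, for every $k \geq \ell$. Combining the two, taking $k = n-1$ yields a walk of length exactly $n-1$ between every ordered pair $(i,j)$, so $M^{n-1} > 0$ and $M$ is primitive.

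The proof is essentially elementary graph theory, and the only genuinely delicate point is the cosmetic one flagged above: the walk-to-entry dictionary only works when $M$ is nonnegative, since otherwise distinct walks could contribute with opposite signs and cancel. In the paper's intended application the lemma is used on matrices of the form $I + cL_t$ arising from the graph $G$ with self-loops adjoined (these matrices are nonnegative for small enough $c > 0$), so the gap is harmless. Beyond this, the main obstacle is simply recognizing that strong connectivity alone is \emph{not} enough — without self-loops, $\Gamma(M)$ could be bipartite in the sense that walks from $i$ to $j$ only exist for lengths of a fixed parity class, blocking the argument — and that the self-loop hypothesis is precisely what breaks this periodicity and enables the padding step.
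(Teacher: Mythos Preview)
The paper does not prove this lemma at all: it is stated with a citation to \cite{r15} and used as a black box, so there is no ``paper's own proof'' to compare against. Your argument is the standard combinatorial one and is correct. The walk-counting identity, the use of strong connectivity to get a short path, and the self-loop padding trick are exactly how this fact is usually established; your remark that nonnegativity of $M$ is tacitly required (so that distinct walk contributions cannot cancel) is a valid observation about the statement as written, and it is indeed satisfied in the paper's application to $A_k = I + \tfrac{1}{k}L$.

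One cosmetic nit: taking $k = n-1$ fails when $n = 1$ (since then $k = 0$, whereas the definition in Lemma~\ref{pfgen} asks for $k > 0$); choosing $k = n$ instead, or simply noting that the $n=1$ case is trivial, closes this harmlessly.
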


Having stated the above two ingredients, we now apply Perron-Frobenius to our system in the lemma below.

\begin{lemma}{\label{pfspec}}
For all $k \geq 1$, $1$ is a simple eigenvalue of $A_k := I + \frac{1}{k} L$. Furthermore, there exists $ 0<\lambda<1$ such that for all $k \geq 1,$ and for all eigenvalues $\lambda^{(k)} \neq 1$ of $A_k$:
\[
|\lambda^{(k)}| \leq 1- \frac{\lambda}{k}.
\]
\end{lemma}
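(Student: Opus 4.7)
The plan is to apply Perron-Frobenius (Lemma \ref{pfgen}) to each $A_k$ with $k \geq 1$, deduce simplicity and the existence of a spectral gap from it, and then upgrade the resulting estimate at $k=1$ to a bound uniform in $k$ using only the triangle inequality.

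First I would verify that $A_k$ meets the hypotheses of Lemma \ref{pfgen}. The off-diagonal entries $A_k^{ij} = L^{ij}/k$ are nonnegative, and are strictly positive precisely when $i \sim j$, so connectedness of $G$ makes the digraph $\Gamma(A_k)$ strongly connected. For the diagonal, the strict bound $\frac{d_j}{d_i+d_j}<1$ gives $\sum_{j \sim i}\frac{d_j/d_i}{d_i+d_j} < \sum_{j \sim i}\frac{1}{d_i}=1$, so
\[
A_k^{ii} = 1 - \frac{1}{k}\sum_{j \sim i}\frac{d_j/d_i}{d_i+d_j} > 0
\]
for every $k \geq 1$. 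All diagonal entries being positive means every vertex of $\Gamma(A_k)$ carries a self-loop, so Lemma \ref{primitive} applies and $A_k$ is primitive. By construction the row sums of $L$ vanish, which gives $L\vec{1}=0$ and hence $A_k\vec{1}=\vec{1}$. Since $\vec{1}$ is a strictly positive eigenvector, Lemma \ref{pfgen} forces $\vec{1}$ to be the Perron eigenvector and $1$ to be the simple eigenvalue of $A_k$ equal to its spectral radius; every other eigenvalue of $A_k$ then has modulus strictly less than $1$.

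For the quantitative part, the eigenvalues of $A_k$ are exactly the numbers $1+\nu/k$ as $\nu$ ranges over the eigenvalues of $L$. If $\nu \neq 0$ is an eigenvalue of $L$, then $1+\nu$ is an eigenvalue of $A_1$ distinct from $1$, so the previous paragraph applied at $k=1$ forces $|1+\nu|<1$ strictly. Setting
\[
\lambda := \min\{\,1 - |1+\nu| \,:\, \nu \text{ a nonzero eigenvalue of } L\,\}
\]
produces a finite minimum of strictly positive numbers, so $\lambda \in (0,1)$. For any $k\geq 1$ and any such $\nu$, the triangle inequality gives
\[
|k+\nu| = |(k-1) + (1+\nu)| \leq (k-1) + |1+\nu| \leq (k-1) + (1-\lambda) = k-\lambda,
\]
and dividing by $k$ yields $|1+\nu/k| \leq 1 - \lambda/k$, as required.

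The only step I expect to require any thought is the passage from a pointwise-in-$k$ Perron-Frobenius bound to a bound of the required form that holds uniformly in $k$. A direct calculus analysis of $k \mapsto |1+\nu/k|$ would work (showing the function is non-decreasing on $[1,\infty)$ with limit $-\operatorname{Re}\nu$), but the triangle inequality shortcut above makes this obstacle vanish: the $k=1$ Perron estimate propagates to every larger $k$ essentially for free, so the main content of the proof is really just checking the Perron-Frobenius hypotheses for $A_1$.
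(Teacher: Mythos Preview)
Your argument is essentially the same as the paper's: verify the Perron--Frobenius hypotheses for $A_k$ (nonnegativity via $-L^{ii}<1$, irreducibility and primitivity via Lemma~\ref{primitive}), identify $\vec 1$ as the Perron vector so that $1$ is simple and dominant, then propagate the $k=1$ spectral gap to all $k$ by the triangle inequality $|k+\nu|\le (k-1)+|1+\nu|$. The one slip is your claim that $\lambda\in(0,1)$: your definition only guarantees $\lambda\in(0,1]$, since $1-|1+\nu|=1$ is possible when $\nu=-1$ is an eigenvalue of $L$; the paper patches this by replacing $\lambda$ with $\tfrac12$ in that edge case, and you should do the same (or simply take $\min(\lambda,\tfrac12)$).
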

\begin{proof}
First notice that, for each row $i$ and for all times $t$, $\sum_j L^{ij} = \sum_j  (\vec{\gamma}_{t+1}^T \circ L_t)^{ij}=0 .$ From this it immediately follows that $0$ is an eigenvalue of both matrices with corresponding right-eigenvector
\[
\vec{1} :=  \begin{bmatrix}1 \\ \cdot \\ \cdot \\ \cdot \\ 1 \end{bmatrix},
\]
and thus that $\vec{1}$ is a right-eigenvector of $A_k$ with eigenvalue $1$. Next, label the eigenvalues $\mu_i$ of $L$ such that $\mu_1 = 0$. Notice that for any $k \geq 1,$ the eigenvalues $\lambda_i^{(k)}$ of $A_k$ are given by $\lambda_i^{(k)} = 1 + \frac{\mu_i}{k}$, numbered such that $\lambda_1^{(k)} = 1$ for all $k$. It remains to show that $1$ is a simple eigenvalue of $A_k$ and the bound given above. 

We invoke the Perron-Frobenius theorem for irreducible non-negative matrices on $A_k := I + \frac{1}{k} L$. $A_k$ is nonnegative since it is clear that all off-diagonal elements are nonnegative, and for all $i$,
\[
L^{ii} = -\sum_{j \sim i} \frac{\frac{\deg(j)}{\deg(i)}}{\deg(i) + \deg(j)} >  -\sum_{j \sim i} \frac{1}{\deg(i)} = -1.
\]
This gives that $A_{k}^{ii} > 0$ and thus that $A_k$ is nonnegative. 

In order to show that $A_k$ is irreducible, we consider its associated weighted digraph $\Gamma(A_k)$, which has vertex set $V,$ a complete edge set $V \times V,$ and weights $W: V \times V \rightarrow \mathbb{R}_{\geq 0}.$ By the definition of $L$, we have that for all $i \sim j$ in the original graph,   there are edges with non-zero weights flowing from $i$ to $j$ and from $j$ to $i$. Since the original graph $V$ is connected, this implies that the weighted digraph associated to $A_k$ is strongly connected, giving that $A_k$ is irreducible. Also note that since the diagonal elements of $A_k$ are all strictly positive, each vertex in $\Gamma(A_k)$ has a self-loop, and thus $A_k$ is primitive by Lemma \ref{primitive}. Thus $A_k$ satisfies the assumptions of the Lemma \ref{pfgen}. Since the eigenvector $\hat{1}$ has components which are all positive, Perron-Frobenius gives that associated eigenvalue $1$ of $A_k$ is simple, that the spectral radius of $A_k$ is $1$, and that all other eigenvalues of $A_k$ have modulus strictly less than $1$.

Let $\lambda$ represent the spectral gap of $A_1$ (unless the spectral gap is $1$, in which case we can arbitrarily set $\lambda = \frac{1}{2}$):
\[
\lambda := \begin{cases}
1-\max_{i>1}|\lambda^{(1)}_i| & \text{if } \max_{i>1}|\lambda^{(1)}_i|  \neq 0 \\
\frac{1}{2} & \text{else}
\end{cases}
\]
This gives that, for all $k\geq 1$ and $i > 1,$
\[
|\lambda_i^{(k)}| = |1 + \frac{\mu_i}{k}| = \frac{1}{k} |\mu_i + k| \leq \frac{1}{k}|\lambda_i^{(1)}| + \frac{k-1}{k} \leq \frac{k-\lambda}{k} = 1-\frac{\lambda}{k}.
\]
\end{proof}

From here forward, we let $\lambda$ represent the number guaranteed by the above lemma. The next lemma shows that $L$ is similar to a symmetric matrix and hence is diagonalizable, which simplifies the long-time analysis involving products of $L$.
\begin{lemma}{\label{diag}}
$L$ is diagonalizable, and can be written $L = PDP^{-1},$ where the first column of $P$ is $\vec{1},$ and $D_{11} = 0.$
\end{lemma}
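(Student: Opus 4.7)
The plan is to exhibit $L$ as similar to a real symmetric matrix and then invoke the spectral theorem. Let $\mathcal{D} := \mathrm{diag}(d_1,\dots,d_n)$ be the degree matrix and set $\tilde{L} := \mathcal{D}\, L\, \mathcal{D}^{-1}$. A direct computation gives
\[
\tilde{L}^{ij} = \frac{d_i}{d_j}\, L^{ij} = \frac{1}{d_i+d_j} \quad \text{for } i\sim j,\; i\ne j,
\]
while $\tilde{L}^{ii} = L^{ii}\in\mathbb{R}$. Hence $\tilde{L}$ is real symmetric, so orthogonally diagonalizable; since $L$ and $\tilde{L}$ are similar, $L$ is diagonalizable (with real eigenvalues) as well.

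Write $\tilde{L} = Q E Q^{T}$ with $Q$ orthogonal and $E$ a real diagonal matrix. From $L\vec{1}=0$ we get $\tilde{L}\vec{d}=0$, where $\vec{d} := (d_1,\dots,d_n)^{T}$; by Lemma \ref{pfspec} applied at $k=1$, the eigenvalue $0$ of $L$ is simple, so the $0$-eigenspace of $\tilde{L}$ is one-dimensional and spanned by $\vec{d}$. I would therefore arrange the orthonormal eigenbasis so that the first column of $Q$ is $\vec{d}/\|\vec{d}\|$ and $E_{11}=0$, and set $P_0 := \mathcal{D}^{-1} Q$, giving $L = \mathcal{D}^{-1}\tilde{L}\mathcal{D} = P_0 E P_0^{-1}$ with $P_0^{-1} = Q^{T}\mathcal{D}$. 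The first column of $P_0$ is then $\mathcal{D}^{-1}\vec{d}/\|\vec{d}\| = \vec{1}/\|\vec{d}\|$.

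Because $E$ is diagonal, rescaling any column of $P_0$ by a nonzero scalar (with the reciprocal rescaling on the corresponding row of $P_0^{-1}$) leaves $P_0 E P_0^{-1}$ unchanged. Taking $P := P_0\cdot \mathrm{diag}(\|\vec{d}\|,1,\dots,1)$ therefore yields a matrix whose first column is $\vec{1}$, with $L = P E P^{-1}$ and $E_{11}=0$, as required. There is no substantive obstacle here: the only real content is spotting that conjugation by the degree matrix symmetrizes $L$; once that observation is in hand, the rest of the argument is pure bookkeeping.
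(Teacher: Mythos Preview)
Your proof is correct and follows essentially the same route as the paper: both arguments conjugate $L$ by the diagonal degree matrix to obtain a real symmetric matrix, then appeal to the spectral theorem. You go a step further than the paper in explicitly constructing $P$ with first column $\vec{1}$ and $D_{11}=0$ (the paper's proof stops at ``$L$ is diagonalizable'' and leaves that choice implicit); your appeal to Lemma~\ref{pfspec} for simplicity of the zero eigenvalue is not strictly needed---one can always place the eigenvector $\vec{1}$ in the first column of an eigenbasis regardless of multiplicity---but it does no harm and there is no circularity, since Lemma~\ref{pfspec} precedes this lemma and does not rely on it.
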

\begin{proof}
Let $E$ be the diagonal matrix with diagonal elements equal to the degree of each vertex:
\[
E^{ij} = \begin{cases}
d_i & i = j\\
0 & i \neq j
\end{cases}.
\]
Note that $E$ has strictly positive entries on the diagonal and is therefore invertible with
\[
(E^{-1})^{ij} = \begin{cases}
\frac{1}{d_i} & i = j\\
0 & i \neq j
\end{cases}.
\]
Then note that $ELE^{-1}$ is symmetric, because
\begin{align*}
\big(ELE^{-1}\big)^{ij} &= \sum_{k,\ell} E^{ik} L^{k \ell} (E^{-1})^{\ell j} \\
=\frac{d_i}{d_j} L^{ij}.
\end{align*}
Now, by definition of $L$: if $i \nsim j,$ then $(ELE^{-1})^{ij} =(ELE^{-1})^{ji} = 0$, and if $i \sim j,$ then $(ELE^{-1})^{ij} = \frac{1}{d_i + d_j} = (ELE^{-1})^{ji}.$ Thus $(ELE^{-1})$ is symmetric and therefore diagonalizable. Since $L$ is similar to a diagonalizable matrix, it is itself diagonalizable.
\end{proof}
From here forward, we fix $P$ and $D$ as given in Lemma \ref{diag}. The above two lemmas make a powerful combination, in the following sense. Note that the solution to the SHE (Proposition \ref{SHE}) involves a product of the $\Lambda$ matrices: $\Pi_{k = j}^{t-1} \Lambda_k.$ In the discussion below, we show that this large product can be approximated by the following product of constant matrices: $\Pi_{k = j}^{t-1} A_k$, which can in turn is similar to a product of diagonal matrices: $\Pi_{k = j}^{t-1} D_k$, where $D_k = P^{-1}A_k P$. Now, while the first entry of each of the $D_k$ is $1$ (corresponding to consensus), the other entries are bounded by $1- \frac{\lambda}{k}$ (due to Lemma \ref{pfspec}). The last ingredient of this section is an application of the theory of gamma functions, due to Gautschi, which shows that while these eigenvalues approach $1$ from below as $k \rightarrow \infty$, the approach is slow enough for their product to approach 0.

\begin{lemma}{\cite{r16}}{\label{gautschi}}
For $0<s<1$:
\[
x^{1-s} < \frac{\Gamma(x+1)}{\Gamma(x+s)} < (x+1)^{1-s}
\]
\end{lemma}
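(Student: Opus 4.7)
The plan is to deduce both inequalities from the log-convexity of the Gamma function, i.e., from the fact that $\varphi(t) := \log \Gamma(t)$ is convex on $(0,\infty)$ (a standard consequence of the Bohr--Mollerup theorem, or directly of the integral representation $\Gamma(t) = \int_0^\infty u^{t-1} e^{-u}\,du$ together with the Cauchy--Schwarz inequality for the measure $u^{-1} e^{-u}\,du$). Since the only machinery needed is convexity plus the functional equation $\Gamma(t+1) = t\,\Gamma(t)$, the whole argument is just a choice of the right convex combination on each side, and the main ``difficulty'' is spotting those convex combinations; there is no real obstacle beyond bookkeeping.

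For the lower bound I would write
\[
x+s = (1-s)\,x + s\,(x+1),
\]
and apply convexity of $\varphi$ to obtain
\[
\log \Gamma(x+s) \leq (1-s)\log \Gamma(x) + s \log \Gamma(x+1).
\]
Using $\Gamma(x+1) = x\,\Gamma(x)$ to eliminate $\Gamma(x)$ gives $\log\Gamma(x+s) \leq \log \Gamma(x+1) - (1-s)\log x$, which rearranges to $\Gamma(x+1)/\Gamma(x+s) \geq x^{1-s}$. The inequality is strict because $\varphi$ is strictly convex (and $s\in(0,1)$ makes the convex combination nondegenerate).

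For the upper bound I would instead write
\[
x+1 = s\,(x+s) + (1-s)\,(x+1+s),
\]
and again use convexity:
\[
\log\Gamma(x+1) \leq s\log\Gamma(x+s) + (1-s)\log\Gamma(x+1+s).
\]
Since $\Gamma(x+1+s) = (x+s)\,\Gamma(x+s)$, this simplifies to
\[
\log\Gamma(x+1) \leq \log\Gamma(x+s) + (1-s)\log(x+s),
\]
giving $\Gamma(x+1)/\Gamma(x+s) \leq (x+s)^{1-s}$; this already beats the claimed bound, and $(x+s)^{1-s} < (x+1)^{1-s}$ because $s<1$ and $1-s>0$. Combining the two displays yields the stated two-sided inequality, and the proof is complete modulo invoking log-convexity of $\Gamma$ as a black box.
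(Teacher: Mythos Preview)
Your proof is correct. The paper does not supply its own argument for this lemma; it simply cites the inequality from the DLMF as a known result, so there is no ``paper's proof'' to compare against. The log-convexity argument you give is in fact the standard proof of Gautschi's inequality (essentially the one in the original 1959 paper), and your upper bound $(x+s)^{1-s}$ is indeed the sharper form that appears in the literature. One small caveat: the statement as written in the paper omits the hypothesis $x>0$, which you implicitly use when invoking $\Gamma(x)$ and $\log x$ in the lower-bound step; you may want to make that explicit.
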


\begin{lemma}{\label{gautschiapp}}
For all $1 \leq j \leq t$ and for $0<\lambda < 1,$
\[
 \Big(\frac{j-1}{t+1}\Big)^\lambda \leq \Pi_{k=j}^t (1 - \frac{\lambda}{k}) \leq \Big(\frac{j}{t}\Big)^\lambda
\]
\end{lemma}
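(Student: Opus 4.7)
The plan is to rewrite the telescoping product as a ratio of Gamma functions and then invoke Gautschi's inequality (Lemma \ref{gautschi}) twice. Writing $1 - \frac{\lambda}{k} = \frac{k - \lambda}{k}$ and setting $s = 1 - \lambda \in (0,1)$, I would observe that
\[
\prod_{k=j}^t \Bigl(1 - \frac{\lambda}{k}\Bigr) \;=\; \frac{\prod_{k=j}^t (k-\lambda)}{\prod_{k=j}^t k} \;=\; \frac{\prod_{k=j}^t \bigl((k-1)+s\bigr)}{\prod_{k=j}^t k}.
\]
Using $\Gamma(z+1) = z\Gamma(z)$ repeatedly, the denominator equals $\Gamma(t+1)/\Gamma(j)$, and the numerator equals $\Gamma(t+s)/\Gamma(j-1+s)$. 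Hence the product can be rearranged as
\[
\prod_{k=j}^t \Bigl(1 - \frac{\lambda}{k}\Bigr) \;=\; \frac{\Gamma(j)/\Gamma(j-1+s)}{\Gamma(t+1)/\Gamma(t+s)}.
\]

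Next I would apply Lemma \ref{gautschi} twice with this particular $s$. For the denominator, take $x = t$, yielding
\[
t^{1-s} \;<\; \frac{\Gamma(t+1)}{\Gamma(t+s)} \;<\; (t+1)^{1-s}.
\]
For the numerator, take $x = j-1$, yielding
\[
(j-1)^{1-s} \;<\; \frac{\Gamma(j)}{\Gamma(j-1+s)} \;<\; j^{1-s}.
\]
Dividing the first inequality of the numerator bound by the second inequality of the denominator bound gives the lower bound $((j-1)/(t+1))^{1-s} = ((j-1)/(t+1))^\lambda$, while dividing the second by the first gives the upper bound $(j/t)^\lambda$. This is exactly the claim.

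The only real subtlety is the boundary case $j = 1$, where the term $(j-1)^{1-s}$ collapses to $0$ and the lower application of Gautschi technically requires $x > 0$. This is harmless: the asserted lower bound is then $0$, which is trivially true since each factor $1 - \lambda/k$ is nonnegative for $k \geq 1$ and $\lambda \in (0,1)$. No other steps present any real obstacle — the proof is essentially a careful bookkeeping of which $\Gamma$-ratio should receive which side of Gautschi's two-sided bound.
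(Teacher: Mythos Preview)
Your proof is correct and follows exactly the paper's approach: rewrite the product as a ratio of Gamma functions and apply Gautschi's inequality (Lemma~\ref{gautschi}) with $s = 1-\lambda$, once at $x = j-1$ and once at $x = t$. The paper is simply terser (it writes the Gamma identity and says ``apply Gautschi''), while you spell out the substitutions and also address the harmless $j=1$ boundary case that the paper leaves implicit.
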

\begin{proof}
We write 
\[
\Pi_{k=j}^t (1 - \frac{\lambda}{k}) = \frac{\Pi_{k=j}^t (k-\lambda)}{\Pi_{k=j}^t (k)} = \frac{\Gamma(j)}{\Gamma(j-\lambda)} \frac{\Gamma(t+1-\lambda)}{\Gamma(t+1)},
\]
and apply Gautschi's inequality (Lemma \ref{gautschi}).
\end{proof}

\section{Convergence of the Consensus Coordinate}

Let $\vec{p}$ be the first row of $P^{-1},$ i.e. the left-eigenvector of $L$ with eigenvalue $0$, and let $a_t:= p \cdot x_t$ be the coordinate corresponding to $\vec{p}$ in the eigenbasis expansion of $L$ (where the eigenbasis is given by the columns of $P$). The goal of this section is to show the following lemma.

\begin{lemma}{\label{consensusmain}}
There exists a random constant $a_\infty$ such that $a_t \rightarrow a_\infty$ in $\mathcal{L}^2.$ 
\end{lemma}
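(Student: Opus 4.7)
The plan is to perform a Doob decomposition of $(a_t)$ that exploits the identity $\vec{p}^T L = 0$, which immediately gives $\vec{p}^T A_k = \vec{p}^T$ for every $k \geq 1$. Starting from the SHE update $\vec{x}_{t+1} = \Lambda_t \vec{x}_t + \gamma_{t+1} \circ \vec{w}_{t+1}$ and using $\vec{p}^T A_t \vec{x}_t = a_t$,
\[
a_{t+1} - a_t = \vec{p}^T(\Lambda_t - A_t)\vec{x}_t + \vec{p}^T(\gamma_{t+1}\circ \vec{w}_{t+1}).
\]
Writing this as $D_{t+1} + M_{t+1}$, where $D_{t+1} := \mathbb{E}_t[a_{t+1} - a_t] = \vec{p}^T(\mathbb{E}_t[\Lambda_t] - A_t)\vec{x}_t$ (the noise carrying zero conditional mean) and $M_{t+1}$ is the associated martingale difference, it is enough to establish (i) $\sum_t \mathbb{E}[M_{t+1}^2] < \infty$, giving $\mathcal{L}^2$-convergence of $\sum_t M_{t+1}$ by orthogonality of martingale differences, and (ii) $\sum_t \|D_{t+1}\|_{\mathcal{L}^2} < \infty$, giving $\mathcal{L}^2$-convergence of $\sum_t D_{t+1}$ by Minkowski. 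Since $|a_t|$ is bounded, these together imply that $(a_t)$ is Cauchy in $\mathcal{L}^2$, hence converges to some $a_\infty$.

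Step (i) is routine. At each timestep $M_{t+1}$ receives nonzero contributions only from the two endpoints of the randomly chosen edge, each of magnitude $O(1/g_t^i)$ since $\|\vec{x}_t\|_\infty \leq 1$ and $|\tilde{w}_t^e| \leq 1$. Because $g_t^i - g_0^i$ is binomial with mean $t d_i / |\mathcal{E}|$, a Chernoff bound yields $\mathbb{E}[(g_t^i)^{-2}] = O(1/t^2)$ for large $t$, so $\mathbb{E}[M_{t+1}^2] = O(1/t^2)$, which is summable.

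Step (ii) is the heart of the argument. A direct computation shows that, for $i \sim j$ with $i \neq j$,
\[
(\mathbb{E}_t[\mathrm{diag}(\gamma_{t+1}) L_t])^{ij} = \frac{g_t^j}{|\mathcal{E}|(g_t^i+1)(g_t^i+g_t^j)},
\]
which coincides exactly with $\tfrac{1}{t} L^{ij} = \tfrac{d_j}{t d_i (d_i + d_j)}$ when $g_t^i = t d_i/|\mathcal{E}|$. Writing $g_t^i = g_0^i + t d_i/|\mathcal{E}| + \delta_t^i$ with $\|\delta_t^i\|_{\mathcal{L}^2} = O(\sqrt{t})$ (binomial fluctuation), a first-order Taylor expansion shows every entry of $\mathbb{E}_t[\mathrm{diag}(\gamma_{t+1}) L_t] - \tfrac{1}{t} L$ is $O((|\delta_t^i| + |\delta_t^j| + 1)/t^2)$ pointwise, and hence $\|D_{t+1}\|_{\mathcal{L}^2} = O(t^{-3/2})$, which is summable. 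The main obstacle I anticipate is making the Taylor expansion rigorous uniformly in $t$ --- especially at early times when $g_t^i$ may be atypical --- together with controlling the ``+1'' and $g_0^i$ lower-order corrections; a standard workaround is to split the series at a large threshold $T_0$, treating $t < T_0$ (finitely many terms) with crude $O(1/t)$ bounds and $t \geq T_0$ with the Taylor estimate, noting that a finite initial tail cannot disrupt $\mathcal{L}^2$-convergence.
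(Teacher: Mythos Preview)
Your proposal is correct and follows essentially the same Doob decomposition as the paper: write $a_{t+1}-a_t$ via the SHE, use $\vec{p}^T L = 0$ to subtract off $\tfrac{1}{t+1}L$, and split into a predictable ``drift'' piece $D_{t+1}=\vec{p}^T\Delta_t\vec{x}_t$ (the paper's $s_t$) and a martingale-difference piece $M_{t+1}$ (the paper's $m_t$), handling each separately.

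Two minor packaging differences are worth recording. For the martingale, the paper does not bound the increments directly; instead it observes $m_t = a_t - a_0 - s_t$, notes that $a_t$ is a.s.\ bounded and $s_t$ converges in $\mathcal{L}^2$, and concludes $\sup_t\|m_t\|_{\mathcal{L}^2}<\infty$, whence the $\mathcal{L}^2$ martingale convergence theorem applies. This sidesteps your Chernoff estimate on $\mathbb{E}[(g_t^i)^{-2}]$ entirely. For the drift, the paper does exactly what you anticipate as the ``workaround'': rather than a Taylor expansion with $\mathcal{L}^2$ control on $\delta_t^i$, it conditions on the concentration event $C_t=\{|g_t^i - g_0^i - td_i/|\mathcal{E}|| \le t^{3/4}/|\mathcal{E}|\ \forall i\}$, on which $|\Delta_t^{ij}|\lesssim t^{-5/4}$ pointwise, and uses the crude a.s.\ bound $|\Delta_t^{ij}|\lesssim t$ on $C_t^c$ together with $\mathbb{P}(C_t^c)\lesssim e^{-ct^{1/2}}$. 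The resulting estimate $\mathbb{E}[\|\Delta_s\|\|\Delta_t\|]\lesssim (st)^{-5/4}+\text{(exp.\ small)}$ is then fed into a direct Cauchy-in-$\mathcal{L}^2$ argument for $s_t$, rather than your Minkowski route via $\sum_t\|D_{t+1}\|_{\mathcal{L}^2}<\infty$; both yield the same conclusion.
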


We decompose $a_t$ as follows:
\begin{align*}
a_t &= a_0 + \sum_{j = 0}^{t-1} (a_{j+1}-a_j)\\
&= a_0 + \vec{p} \cdot \sum_{j = 0}^{t-1} (\vec{x}_{j+1}-\vec{x}_j)\\
&= a_0 + \vec{p} \cdot \sum_{j = 0}^{t-1} \vec{\gamma}^T_{j+1}\circ L_j \vec{x}_j + \gamma^T_{j+1}\circ \vec{w}_{j+1} \\
&=a_0 + \vec{p} \cdot \sum_{j = 0}^{t-1} (\vec{\gamma}^T_{j+1}\circ L_j  -  \frac{1}{j+1}L)\vec{x}_j + \frac{1}{j+1}L \vec{x}_j + \vec{\gamma}^T_{j+1}\circ \vec{w}_{j+1}  \\
&=a_0 + \vec{p} \cdot \sum_{j = 0}^{t-1} (\vec{\gamma}^T_{j+1}\circ L_j  -  \frac{1}{j+1}L)\vec{x}_j  + \vec{\gamma}^T_{j+1}\circ \vec{w}_{j+1}  \\
\end{align*}
where we've used the SHE update in the third line, and the fact that $\vec{p}$ is a left 0-eigenvector of $L$ in the fifth.

Now, there are two main differences between the dampened diffusion matrix $\vec{\gamma}^T_{j+1}\circ L_j$ and the dampened influence matrix $\frac{1}{j+1} L.$ The first is that the diffusion matrix only involves a random edge, while the influence matrix considers all edges. The second is that the $g_t$ are random functions of the $\psi$ variables, while $L$ is a constant. We separate out these two differences by adding and subtracting $\mathbb{E}_j[\vec{\gamma}^T_{j+1}\circ L_j]$:
\begin{align*}
a_t &=a_0 + m_t + s_t,
\end{align*}
where
\begin{align*}
m_t &:= \vec{p} \cdot \sum_{j = 0}^{t-1} ( \gamma^T_{j+1}\circ L_j -\mathbb{E}_j[ \gamma^T_{j+1}\circ L_j]) \vec{x}_j + \vec{\gamma}^T_{j+1}\circ \vec{w}_{j+1}\\
s_t &:= \vec{p} \cdot \sum_{j = 0}^{t-1}\Delta_j \vec{x}_j\\
\Delta_j &:=  \mathbb{E}_j[\vec{\gamma}^T_{j+1}\circ L_j] - \frac{1}{j+1} L
\end{align*}
We consider each of $s_t$ (which stands for 'small') and $m_t$ (which stands for martingale) separately; in order to show Lemma \ref{consensusmain}, it suffices to show that each of $s_t$ and $m_t$ converge in $\mathcal{L}^2$. While $s_t$ is nonzero due to the randomness of $g_t$, we show that each term is small in expectation and therefore that the sum is convergent, while $m_t$ is shown to be a martingale, on which we will invoke the martingale convergence theorem.

Before proceeding, we state a useful lemma which allows us to rigorously pass from sums to integrals:

\begin{lemma}{\label{sumint}}
Let $f(k)$ be nonnegative on $[t_1,t_2]$, non-decreasing on $[t_1,x]$ and non-increasing on $[x,t_2]$ for some $x \in [t_1,t_2].$ Then
\[
\sum_{k=t_1}^{t_2} f(k) \leq \int_{t_1}^{t_2} f(k) dk +  2 f(x)
\]
\end{lemma}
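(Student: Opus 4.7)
The statement is a standard monotone sum-to-integral comparison adapted to allow a single interior peak at $x$. The plan is to split the sum at the integer $n := \lfloor x \rfloor$, so that on $[t_1, n]$ the function is non-decreasing (because $n \le x$) and on $[n+1, t_2]$ it is non-increasing (because $n+1 \ge x$), then apply the familiar one-sided Riemann bound on each half and collect the two leftover boundary terms.

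Concretely, I would argue that for a non-decreasing function the rectangle inequality $f(k) \le \int_k^{k+1} f(s)\, ds$ for each integer $k < n$ sums to $\sum_{k=t_1}^{n} f(k) \le \int_{t_1}^{n} f(s)\, ds + f(n)$, while for a non-increasing function the symmetric estimate $f(k) \le \int_{k-1}^{k} f(s)\, ds$ for each integer $k > n+1$ yields $\sum_{k=n+1}^{t_2} f(k) \le \int_{n+1}^{t_2} f(s)\, ds + f(n+1)$. Adding these two bounds and discarding the missing integral over $[n, n+1]$ (which is nonnegative by hypothesis) leaves a surplus of $f(n) + f(n+1)$ over $\int_{t_1}^{t_2} f(s)\, ds$.

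The last step is to absorb this surplus into $2f(x)$ using the monotonicity hypotheses: $f(n) \le f(x)$ since $n \le x$ and $f$ is non-decreasing on $[t_1, x]$, and symmetrically $f(n+1) \le f(x)$ since $n+1 \ge x$ and $f$ is non-increasing on $[x, t_2]$. There is no real obstacle here; the only point worth a brief sanity check is the degenerate cases $x \in \{t_1, t_2\}$ or $x \in \mathbb{Z}$, in which one of the two halves collapses or $n+1$ coincides with $x$. In each such case one of the two surplus terms either disappears or already equals $f(x)$, so the bound continues to hold with slack.
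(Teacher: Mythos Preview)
Your proposal is correct and follows essentially the same approach as the paper: both arguments peel off the (at most) two integer summands adjacent to the peak $x$, bound each by $f(x)$, and handle the two monotone tails by the standard rectangle-vs-integral comparison. The only cosmetic difference is that the paper removes the terms at $\lfloor x\rfloor$ and $\lceil x\rceil$ while you remove those at $n=\lfloor x\rfloor$ and $n+1$, which coincide whenever $x\notin\mathbb{Z}$ and differ only trivially otherwise.
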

\begin{proof}
In the below, we take sums with lower endpoint strictly greater than upper endpoint to be $0$. We have:
\begin{align*}
\sum_{k=t_1}^{t_2} f(k) &\leq 2 f(x) + \sum_{k=t_1}^{\lfloor x \rfloor -1 } f(k) + \sum_{k= \lceil x \rceil + 1}^{t_2} f(k) \\
& \leq  \int_{t_1}^{\lfloor x \rfloor} f(k) + \int_{ \lceil x \rceil}^{t_2} f(k) + 2 f(x) \\
& \leq \int_{t_1}^{t_2} f(k) dk + 2 f(x).
\end{align*}
\end{proof}

\subsection{$s_t:$ Fluctuations of $\vec{g}_t$}
Recall that
\[
\mathbb{E}_t [\vec{\gamma}_{t+1}^T \circ L_t ]^{ij}= 
\begin{cases}
\frac{1}{E(g^i_t + 1)}\frac{g^j_t}{g^i_t + g^j_t} &  i \sim j \\
-\sum_{k \sim i} \frac{1}{E(g^i_t + 1)}\frac{g^k_t}{g^i_t + g^k_t} & i=j\\
0 & i \nsim j.
\end{cases}
\]
Now, the random variable $g_t^i$ is equal to $g_0^i$ plus a binomial random variable resulting from $t$ trials with probability $\frac{d_i}{E}$ of success for each trial. Thus we expect each $g_t^i$ to grow like $\frac{d_i}{E}t$, with standard deviation proportional to $\sqrt{t}$. This gives the heuristic that $\mathbb{E}[\|\Delta_j\|] = O(t^{-\frac{3}{2}})$. This idea is supported by the following concentration inequality for the binomial random variable, which can be used to show that the probability of $g_t^i - g_0^i$ deviating from its mean by $t^{\frac{1}{2} + \epsilon}$ is exponentially small in $t$. 

\begin{lemma}{\cite{r17}}{\label{AS}}
Let $B\sim \text{Bin}(n,p)$ be a binomial random variable, and let $a>0$. Then
\[
\mathbb{P}(|B-np| > a) < 2 \exp(-\frac{2a^2}{n}).
\]
\end{lemma}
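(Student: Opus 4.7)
The plan is to use the classical Chernoff bounding method together with Hoeffding's lemma for bounded random variables. Write $B = \sum_{i=1}^n X_i$, where the $X_i$ are i.i.d.\ Bernoulli$(p)$, and focus first on the upper tail $\mathbb{P}(B - np > a)$; the lower tail follows by applying the same argument to $n-B \sim \text{Bin}(n, 1-p)$, and a union bound produces the factor of $2$ in the final inequality.

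For the upper tail, I would apply Markov's inequality to $e^{\lambda(B-np)}$ for $\lambda > 0$ and use independence to obtain
\[
\mathbb{P}(B - np > a) \leq e^{-\lambda a}\prod_{i=1}^n \mathbb{E}\bigl[e^{\lambda(X_i - p)}\bigr].
\]
The key ingredient is Hoeffding's lemma: for a mean-zero random variable $Y$ with $Y \in [\alpha, \beta]$ almost surely, one has $\mathbb{E}[e^{\lambda Y}] \leq \exp\!\bigl(\lambda^2(\beta-\alpha)^2/8\bigr)$. Each $X_i - p$ is mean-zero and supported in $[-p, 1-p]$, an interval of length $1$, so this bound gives $\mathbb{E}[e^{\lambda(X_i-p)}] \leq e^{\lambda^2/8}$. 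Substituting and optimizing in $\lambda$ (the minimum occurs at $\lambda^{\ast} = 4a/n$) yields $\mathbb{P}(B - np > a) \leq \exp(-2a^2/n)$, as desired.

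The main obstacle is Hoeffding's lemma itself. My approach would be to analyze the log-moment generating function $\phi(\lambda) := \log \mathbb{E}[e^{\lambda Y}]$: direct computation shows $\phi(0) = 0$ and $\phi'(0) = \mathbb{E}[Y] = 0$, while $\phi''(\lambda)$ equals the variance of $Y$ under the exponentially tilted probability measure with density proportional to $e^{\lambda Y}$. This tilted distribution is still supported in $[\alpha, \beta]$, so its variance is at most $(\beta - \alpha)^2/4$ by Popoviciu's inequality on variances. Integrating $\phi''$ twice from $0$ then gives $\phi(\lambda) \leq \lambda^2(\beta-\alpha)^2/8$, which is precisely Hoeffding's lemma. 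Everything else in the proof is bookkeeping: expanding the product, collecting exponents, and performing the single-variable minimization, all of which are routine.
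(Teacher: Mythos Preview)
Your argument is the standard Chernoff--Hoeffding derivation and is correct; it yields $\mathbb{P}(|B-np|>a)\le 2\exp(-2a^2/n)$. The paper, however, does not supply a proof of this lemma at all: it is quoted from Alon and Spencer and used as a black-box concentration tool. So there is nothing to compare against beyond noting that your proof is exactly the textbook one the citation points to.

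One minor remark: as written, your argument produces a non-strict inequality, whereas the lemma as stated has a strict one. This can be recovered by observing that Hoeffding's lemma is strict for $\lambda\neq 0$ whenever the underlying variable is nondegenerate (and the degenerate cases $p\in\{0,1\}$ give probability zero on the left). This is cosmetic and does not affect any downstream use in the paper.
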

The above statement serves as the main tool for showing that $\Delta_j$ is indeed small. In particular, we use prove the following Lemma, which will be used to show that $s_t$ converges in $\mathcal{L}^2.$ From here forward, we use the notation $f(t) \lesssim g(t)$ to mean that there exists a constant $c$, independent of $t$, such that $f(t) \leq c g(t).$
\begin{lemma}{\label{delta}}
For sufficiently large $s, t$:
\begin{align*}
\mathbb{E}[\|\Delta_s\| \|\Delta_t\|] &\lesssim  \frac{1}{(st)^{\frac{5}{4}}}+ \frac{s}{t^{\frac{5}{4}}} \exp(- \frac{2}{|\mathcal{E}|^2}s^{1/2}) +st\exp(- \frac{2}{|\mathcal{E}|^2}t^{1/2})
\end{align*}
\end{lemma}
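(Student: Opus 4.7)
The plan is to exploit the binomial concentration inequality (Lemma \ref{AS}) by decomposing the expectation on a ``good'' event $B_t$ where each $g_t^i$ is close to its mean $g_0^i + \tfrac{d_i t}{|\mathcal{E}|}$, bounding $\|\Delta_t\|$ by a Taylor expansion on $B_t$ and by a trivial bound on its complement. The right deviation scale is $t^{3/4}$, which is the sweet spot: any slower and the good-event bound on $\|\Delta_t\|$ becomes too weak; any faster and the concentration exponent deteriorates.

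First I would set
\[
B_t := \Big\{\,\big|g_t^i - g_0^i - \tfrac{d_i t}{|\mathcal{E}|}\big| \le \tfrac{t^{3/4}}{|\mathcal{E}|} \text{ for every } i\in \mathcal{V}\,\Big\}.
\]
Since $g_t^i - g_0^i \sim \mathrm{Bin}(t, d_i/|\mathcal{E}|)$, Lemma \ref{AS} with $a = t^{3/4}/|\mathcal{E}|$, combined with a union bound over the $|\mathcal{V}|$ vertices, yields $\mathbb{P}(B_t^c) \lesssim \exp(-2 t^{1/2}/|\mathcal{E}|^2)$, which produces exactly the exponent appearing in the statement.

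Next, on $B_t$ I would write $g_t^i = \tfrac{d_i t}{|\mathcal{E}|}(1+\eta_t^i)$ with $|\eta_t^i| \lesssim t^{-1/4}$, and expand each off-diagonal entry:
\[
\frac{1}{|\mathcal{E}|(g_t^i+1)}\cdot\frac{g_t^j}{g_t^i+g_t^j} = \frac{1}{d_i t}\cdot\frac{d_j}{d_i+d_j} + O(t^{-5/4}) = \frac{L^{ij}}{t} + O(t^{-5/4}),
\]
while $\tfrac{L^{ij}}{t+1} = \tfrac{L^{ij}}{t} + O(t^{-2})$. Diagonal entries, being finite sums of such terms, inherit the same rate. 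Because $\Delta_t$ has a fixed (graph-dependent) number of nonzero entries, one concludes $\|\Delta_t\| \lesssim t^{-5/4}$ on $B_t$. On $B_t^c$ I would use the crude bound $\|\Delta_t\|\lesssim 1$, noting that the entries of $\mathbb{E}_t[\vec{\gamma}_{t+1}^T \circ L_t]$ are at most $\tfrac{1}{|\mathcal{E}|(g_0^i+1)}$ and $\|\tfrac{L}{t+1}\| = O(1/t)$.

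To finish, I would decompose
\[
\mathbb{E}[\|\Delta_s\|\|\Delta_t\|] = \mathbb{E}\bigl[\cdot\,\mathbbm{1}_{B_s\cap B_t}\bigr] + \mathbb{E}\bigl[\cdot\,\mathbbm{1}_{B_s^c\cap B_t}\bigr] + \mathbb{E}\bigl[\cdot\,\mathbbm{1}_{B_t^c}\bigr],
\]
and bound the first term by $(st)^{-5/4}$, the second by $t^{-5/4}\mathbb{P}(B_s^c)\lesssim t^{-5/4}\exp(-2s^{1/2}/|\mathcal{E}|^2)$, and the third by $\mathbb{P}(B_t^c)\lesssim \exp(-2t^{1/2}/|\mathcal{E}|^2)$. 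Each is dominated by the corresponding term in the statement, since the extra polynomial factors $s$ and $st$ in the stated bound are easily absorbed by the exponentials. The main obstacle is the Taylor expansion on $B_t$: obtaining the $t^{-5/4}$ rate (rather than the naive $O(t^{-1})$) requires the precise cancellation between the leading $\tfrac{L^{ij}}{t}$ coming from $\mathbb{E}_t[\vec{\gamma}_{t+1}^T \circ L_t]$ and the $\tfrac{L^{ij}}{t+1}$ coming from $\tfrac{1}{t+1}L$, together with correct tracking of the perturbations $\eta_t^i, \eta_t^j$ inside both factors of the off-diagonal entry.
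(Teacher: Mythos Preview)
Your proposal is correct and follows essentially the same route as the paper: the same good event (the paper calls it $C_t$), the same application of Lemma~\ref{AS} with $a=t^{3/4}/|\mathcal{E}|$, the same entrywise expansion yielding $\|\Delta_t\|\lesssim t^{-5/4}$ on the good event, and the same three-way decomposition of the expectation. The only difference is that on the bad event you use the (correct and tighter) crude bound $\|\Delta_t\|\lesssim 1$, whereas the paper uses $\|\Delta_t\|\lesssim t$; this is why the stated bound carries the extra polynomial factors $s$ and $st$, which your estimate shows are unnecessary.
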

\begin{proof}
Define
\begin{align*}
C_t &:= \{\bigl| |\mathcal{E}| \frac{g_t^i- g_0^i}{t} - d_i \bigr|\leq \frac{1}{t^{\frac{1}{4}}} \, : \, \forall i \in \mathcal{V}\}\\
\delta_t^i &:= |\mathcal{E}|\frac{g_t^i- g_0^i}{t} + |\mathcal{E}| \frac{g_0^i}{t} -  d_i,
\end{align*}
and note that $|\mathcal{E}| \frac{g_0^i}{t} - d_i \leq \delta_t^i \leq |\mathcal{E}| (g_0^i + 1)- d_i$ almost surely. Note also that using $a = \frac{t^{3/4}}{|\mathcal{E}|}$ in Lemma \ref{AS} produces
\[
\mathbb{P}(C^c_t) < 2|\mathcal{V}|\exp(- \frac{2}{|\mathcal{E}|^2}t^{\frac{1}{2}}). 
\]
Now, for fixed $i \sim j$, we have that:
\begin{align*}
(t+1) \Delta_t^{ij} &= \frac{t+1}{|\mathcal{E}|(g^i_{t}+1)} \frac{g_t^j}{g_t^i + g_t^j} - \frac{\frac{d_j}{d_i}}{d_i +d_j }\\
& = \frac{1+\frac{1}{t}}{(\delta^i_{t} + d_i+\frac{|\mathcal{E}|}{t})} \frac{\delta_t^j + d_j}{\delta_t^j + d_j + \delta_t^i + d_i} - \frac{d_j}{d_i(d_i + d_j) }\\
&= \frac{(1+\frac{1}{t})(d_i + d_j)(\delta_t^j + d_j)d_i-d_j (\delta^i_{t} + d_i+\frac{|\mathcal{E}|}{t})(\delta_t^j + d_j + \delta_t^i + d_i) }{d_i(d_i + d_j)(\delta^i_{t} + d_i+\frac{|\mathcal{E}|}{t})(\delta_t^j + d_j + \delta_t^i + d_i)}\\
\end{align*}
Almost surely:
\begin{align*}
(t+1) |\Delta_t^{ij}|&\leq \frac{c_1 |\delta_t^i| + c_2 |\delta_t^j| + c_3 \frac{1}{t}}{(d_i + \delta_{t}^i)(d_j + \delta_{t}^j)}
\end{align*}
for some $t$-independent constants $c_1,c_2,c_3.$
Further, on $C_t,$ $|\delta_t^i| \lesssim \frac{1}{t^{1/4}}$ for all $i$. So, on $C_t$ for sufficiently large $t$,
\[
|\Delta_t^{ij}|\lesssim \frac{1}{t^{5/4}}.
\]
It's also easy to see that, almost surely (in particular, on $C_t^c$),
\[
|\Delta_t^{ij}| \lesssim t
\]
Further, since $\Delta_t$ is row-stochastic for all $t$, we can drop the requirement that $i \sim j$ for the above two inequalities on $|\Delta^{ij}_t|$ (perhaps at the cost of a larger constant). 

Now, for $s \neq t,$
\begin{align*}
 \mathbb{E} [\|\Delta_s\|_\text{max} \|\Delta_t\|_\text{max}] & =  \mathbb{E} [\max_{i,j,k,\ell}|\Delta^{ij}_s \Delta^{k \ell}_t||C_s \cap C_t] \mathbb{P}(C_s \cap C_t)\\& +   \mathbb{E} [\max_{i,j,k,\ell}|\Delta^{ij}_s \Delta^{k \ell}_t||C_s^c \cap C_t ] \mathbb{P}(C_s^c \cap C_t) +   \mathbb{E} [\max_{i,j,k,\ell}|\Delta^{ij}_s \Delta^{k \ell}_t||C_t^c] \mathbb{P}(C_t^c) \\
& \lesssim \frac{1}{(st)^{\frac{5}{4}}}+ \frac{s}{t^{\frac{5}{4}}} \exp(- \frac{2}{\mathcal{E}^2}s^{1/2}) + st\exp(- \frac{2}{\mathcal{E}^2}t^{1/2}),
\end{align*}
where $\|A\|_{\max} := \max_{i,j} |A^{ij}|$. Finally, note that
$\|\Delta_t\| \lesssim \|\Delta_t\|_\text{max}$. This gives the desired result for $s \neq t.$

When $s = t,$ we have:
\begin{align*}
 \mathbb{E} [\max_{i,j}|\Delta^{ij}_t|^2] & \leq  \mathbb{E} [\max_{i,j}|\Delta^{ij}_t|^2|| C_t] \mathbb{P}(C_t) +   \mathbb{E} [\max_{i,j}|\Delta^{ij}_t|^2|C_t^c ] \mathbb{P}(C_t^c) \\
&\lesssim \frac{1}{t^{\frac{5}{2}}} + t^2 \exp(-\frac{2}{|\mathcal{E}|}t^{1/2}),
\end{align*}
and again we use that $\|\Delta_t\| \lesssim \|\Delta_t\|_\text{max}$.
\end{proof}
This allows us to prove the desired convergence of $s_t$.
\begin{lemma}{\label{sconverges}}
$s_t$ converges in $\mathcal{L}^2$.
\end{lemma}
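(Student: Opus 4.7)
The strategy is to show that $(s_t)_{t \geq 0}$ is Cauchy in $\mathcal{L}^2$; completeness of $\mathcal{L}^2$ then yields convergence to some limit $s_\infty$. Fix $r < t$ and write the tail
\[
s_t - s_r = \vec{p} \cdot \sum_{j=r}^{t-1} \Delta_j \vec{x}_j.
\]
Since each component of $\vec{x}_j$ lies in $[0,1]$, we have $\|\vec{x}_j\| \leq \sqrt{|\mathcal{V}|}$; combining Cauchy--Schwarz with submultiplicativity of the operator norm produces the deterministic bound
\[
|s_t - s_r| \leq \|\vec{p}\|\sqrt{|\mathcal{V}|} \sum_{j=r}^{t-1} \|\Delta_j\|.
\]
Taking $\mathcal{L}^2$-norms and applying Minkowski's inequality reduces the problem to controlling the tail $\sum_{j=r}^{t-1} \bigl(\mathbb{E}[\|\Delta_j\|^2]\bigr)^{1/2}$.

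To estimate this, I would specialize Lemma \ref{delta} at $s = t = j$ to obtain, for all sufficiently large $j$,
\[
\mathbb{E}[\|\Delta_j\|^2] \lesssim j^{-5/2} + j^2 \exp\bigl(-c\, j^{1/2}\bigr),
\]
with $c := 2/|\mathcal{E}|^2$. Using $\sqrt{a+b} \leq \sqrt{a}+\sqrt{b}$ for $a,b \geq 0$,
\[
\bigl(\mathbb{E}[\|\Delta_j\|^2]\bigr)^{1/2} \lesssim j^{-5/4} + j \exp\bigl(-c\, j^{1/2}/2\bigr).
\]

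Both summands on the right are summable over $j \in \mathbb{N}$: the first by the $p$-series test since $5/4 > 1$, and the second by comparison with $\int_1^\infty x \exp(-c\sqrt{x}/2)\,dx$, which is finite. Hence $\sum_{j=1}^\infty (\mathbb{E}[\|\Delta_j\|^2])^{1/2} < \infty$, and its tail $\sum_{j=r}^{t-1}$ vanishes as $r \to \infty$, uniformly in $t > r$. This gives $\|s_t - s_r\|_{\mathcal{L}^2} \to 0$, establishing that $(s_t)$ is Cauchy, and therefore convergent, in $\mathcal{L}^2$. The heavy lifting has already been done in Lemma \ref{delta}; the only point to verify here is that the exponent $5/4$ delivered by that lemma strictly exceeds $1$, which is exactly what makes the polynomial part of the bound summable, while the exponential part is automatically harmless.
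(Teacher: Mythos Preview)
Your proof is correct and follows the same overall plan as the paper: show $(s_t)$ is Cauchy in $\mathcal{L}^2$ by bounding the increments via $\|\Delta_j\|$ and invoking Lemma~\ref{delta}. The one notable difference is in how you pass from the pointwise bound to an $\mathcal{L}^2$ bound. The paper squares first, obtaining $(s_{t_2}-s_{t_1})^2 \lesssim \sum_{j,k}\|\Delta_j\|\|\Delta_k\|$, and then applies the full two-index estimate of Lemma~\ref{delta} to the resulting double sum. You instead take the $\mathcal{L}^2$-norm and apply Minkowski, which collapses everything to the single sum $\sum_j (\mathbb{E}[\|\Delta_j\|^2])^{1/2}$ and requires only the diagonal case $s=t$ of Lemma~\ref{delta}. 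Your route is slightly cleaner and shows in hindsight that the off-diagonal part of Lemma~\ref{delta} is not actually needed for this step; the paper's version, on the other hand, makes fuller use of the lemma as stated and avoids the square-root manipulation. Either way the key input is the same: the exponent $5/4>1$ from Lemma~\ref{delta} makes the polynomial tail summable.
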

\begin{proof}
It suffices to show Cauchy in $\mathcal{L}^2,$ i.e. that for any $\epsilon > 0,$ there exists $T$ such that for all $t_1,t_2 > T,$ $\mathbb{E}[(s_{t_1}-s_{t_2})^2] \leq \epsilon$. Note that
\[
(s_{t_2} - s_{t_1})^2 \lesssim \sum_{j,k = t_1}^{t_2-1}  \| \Delta_j\|_2 \|  \Delta_k \|_2 \lesssim \sum_{j=t_1}^{t_2-1} \sum_{k = t_1}^j  \| \Delta_j\|_2 \|  \Delta_k \|_2 
\]
Taking expectations and using the lemma, 
\[
\mathbb{E} [(s_{t_2} - s_{t_1} )^2] \lesssim \sum_{j=t_1}^{t_2-1} \sum_{k = t_1}^j \frac{1}{(jk)^{\frac{5}{4}}}+ \frac{k}{j^{\frac{5}{4}}} \exp(- \frac{2}{|\mathcal{E}|^2}k^{1/2}) +jk\exp(- \frac{2}{|\mathcal{E}|^2}j^{1/2}).
\]
It's now clear, for example from Lemma \ref{sumint}, that the lemma follows.
\end{proof}
\subsection{$m_t$: Martingale Convergence}
The goal of the subsection is to prove that $m_t$ converges. We begin by stating the $\mathcal{L}^2$ martingale convergence theorem without proof.
\begin{lemma}{\cite{r18}}{\label{mct}}
Let $y_t$ be a martingale with $y_t \in \mathcal{L}^2$ for all $t$. Further assume that $\sup_t \|y_t\|_{\mathcal{L}^2} < \infty$. Then $y_t$ converges in $\mathcal{L}^2$.
\end{lemma}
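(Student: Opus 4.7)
The plan is to prove this classical $\mathcal{L}^2$ martingale convergence result via the standard Hilbert space argument, exploiting the orthogonality of martingale increments. First I would define the increments $d_k := y_k - y_{k-1}$ for $k \geq 1$. For any $s < t$, since $d_s$ is measurable with respect to the underlying filtration at time $t-1$, the tower property together with the martingale property gives
\[
\mathbb{E}[d_t \, d_s] = \mathbb{E}\!\left[d_s \, \mathbb{E}[d_t \mid \mathcal{F}_{t-1}]\right] = 0,
\]
so $(d_k)_{k \geq 1}$ is a pairwise orthogonal sequence in $\mathcal{L}^2$.

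From orthogonality, a Pythagorean decomposition yields
\[
\|y_t\|_{\mathcal{L}^2}^2 = \|y_0\|_{\mathcal{L}^2}^2 + \sum_{k=1}^{t} \|d_k\|_{\mathcal{L}^2}^2.
\]
Since the left-hand side is uniformly bounded in $t$ by hypothesis, the series $\sum_{k=1}^\infty \|d_k\|_{\mathcal{L}^2}^2$ converges. Next I would show that $(y_t)$ is Cauchy in $\mathcal{L}^2$: for $s < t$, orthogonality of the increments again gives
\[
\|y_t - y_s\|_{\mathcal{L}^2}^2 = \sum_{k=s+1}^{t} \|d_k\|_{\mathcal{L}^2}^2,
\]
and the right-hand side tends to $0$ as $s,t \to \infty$ since it is the tail of a convergent series. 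Completeness of $\mathcal{L}^2$ then delivers a limit $y_\infty \in \mathcal{L}^2$ with $y_t \to y_\infty$ in $\mathcal{L}^2$.

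There is not really a substantive obstacle here, as the theorem is textbook and rests only on Hilbert space geometry plus the defining property of a martingale. The one point that warrants care is the orthogonality computation: one must justify pulling $d_s$ out of the inner conditional expectation using $\mathcal{F}_{t-1}$-measurability before invoking $\mathbb{E}[d_t \mid \mathcal{F}_{t-1}] = 0$, and one must verify that the integrability hypothesis $y_t \in \mathcal{L}^2$ ensures each $d_k \in \mathcal{L}^2$ so that all inner products are well-defined.
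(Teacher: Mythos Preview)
Your proof is correct and is precisely the standard Hilbert-space argument for the $\mathcal{L}^2$ martingale convergence theorem. The paper, however, does not prove this lemma at all: it is stated explicitly ``without proof'' and attributed to the cited textbook (Williams, \emph{Probability with Martingales}), where the argument you give is essentially the one presented. So there is nothing to compare against; you have simply supplied the omitted textbook proof.
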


\begin{lemma}{\label{mconverges}}
$m_t$ converges in $\mathcal{L}^2$.
\end{lemma}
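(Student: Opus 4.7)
The plan is to apply the $\mathcal{L}^2$ martingale convergence theorem (Lemma~\ref{mct}). This requires showing (i) that $m_t$ is an $\mathcal{F}_t$-martingale and (ii) that $\sup_t \mathbb{E}[m_t^2] < \infty$. Each increment $\Delta m_j := m_{j+1}-m_j$ splits as
\[
\Delta m_j = \vec{p}\cdot \bigl(\vec{\gamma}_{j+1}^T \circ L_j - \mathbb{E}_j[\vec{\gamma}_{j+1}^T \circ L_j]\bigr)\vec{x}_j + \vec{p}\cdot(\vec{\gamma}_{j+1}^T \circ \vec{w}_{j+1}),
\]
where the first summand is conditionally mean-zero by construction (since $\vec{x}_j \in m\mathcal{F}_j$). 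So the martingale check reduces to handling the second (``noise'') summand.

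For the noise term, both $\vec{\gamma}_{j+1}$ and $\vec{w}_{j+1}$ are $\mathcal{F}_{j+1}$-measurable and entangled through the chosen edge, so one cannot naively ``take out'' $\vec{\gamma}_{j+1}$. The key observation I would use is that $\gamma_{j+1}^i$ only changes from $\gamma_j^i$ when an edge incident to $i$ is selected: on $\{S_{j+1}^e = 1\}$ with $e \rightarrowtail i$, $\gamma_{j+1}^i = 1/(g_j^i+1)$ is $\mathcal{F}_j$-measurable, while if $S_{j+1}^e = 0$ the corresponding summand $\tilde{w}_{j+1}^e S_{j+1}^e$ vanishes. Factoring out the $\mathcal{F}_j$-measurable constant $1/(g_j^i+1)$ and invoking the earlier computation that $\mathbb{E}_j[w_{j+1}^i] = 0$ gives $\mathbb{E}_j[\Delta m_j]=0$.

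For (ii), martingale orthogonality yields $\mathbb{E}[m_t^2] = \sum_{j=0}^{t-1} \mathbb{E}[(\Delta m_j)^2]$, so it suffices to produce a summable bound on each term. I would reuse the concentration event $C_j$ from the proof of Lemma~\ref{delta}. On $C_j$, every $g_{j+1}^i$ is of order $j$, so $\gamma_{j+1}^i \lesssim 1/j$; combined with the uniform boundedness of $L_j$, of $\mathbb{E}_j[\vec{\gamma}_{j+1}^T \circ L_j]$, of $\vec{x}_j \in [0,1]^{|\mathcal{V}|}$, and of the at-most-two nonzero entries of $\vec{w}_{j+1}$ (each in $[-1,1]$), this yields $(\Delta m_j)^2 \lesssim 1/j^2$ on $C_j$. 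Off $C_j$, the increment is bounded deterministically using $\gamma_{j+1}^i \leq 1/g_0^i$, and $\mathbb{P}(C_j^c) \lesssim \exp(-c\sqrt{j})$ by Lemma~\ref{AS}. Summing gives $\mathbb{E}[(\Delta m_j)^2] \lesssim j^{-2} + \exp(-c\sqrt{j})$, which is summable.

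The main obstacle is the martingale verification for the noise term: the inseparability of $\vec{\gamma}_{j+1}$ from $\vec{w}_{j+1}$ demands the event-by-event argument above rather than a clean application of ``taking out what is known.'' Once that is in place, the variance bound is essentially routine given the concentration tools already developed for $s_t$.
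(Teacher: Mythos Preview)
Your proposal is correct, and the martingale verification is essentially the paper's argument in different clothing: the paper conditions on $\sigma(\mathcal{F}_{j},\sigma(\psi_{j+1}))$ to pull out $\gamma_{j+1}^i S_{j+1}^e$ before using $\mathbb{E}[\tilde w_{j+1}^e\mid \sigma(\mathcal{F}_{j},\sigma(\psi_{j+1}))]=0$, whereas you observe the pointwise identity $\gamma_{j+1}^i\,\tilde w_{j+1}^e S_{j+1}^e = \tfrac{1}{g_j^i+1}\,\tilde w_{j+1}^e S_{j+1}^e$ and then invoke $\mathbb{E}_j[w_{j+1}^i]=0$. Both resolve the same entanglement.

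Where you genuinely diverge is in establishing $\sup_t \|m_t\|_{\mathcal{L}^2}<\infty$. You bound the increment variances directly via the concentration event $C_j$, obtaining $\mathbb{E}[(\Delta m_j)^2]\lesssim j^{-2}+\exp(-c\sqrt{j})$ and summing. The paper instead exploits the algebraic identity $m_t = a_t - a_0 - s_t$: since $a_t=\vec p\cdot\vec x_t$ is almost surely bounded (because $\vec x_t\in[0,1]^{|\mathcal V|}$) and $s_t$ was just shown to converge in $\mathcal{L}^2$ (Lemma~\ref{sconverges}), boundedness of $m_t$ in $\mathcal{L}^2$ is immediate by the triangle inequality. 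The paper's route is shorter and recycles Lemma~\ref{sconverges} rather than re-running a concentration argument; your route is self-contained and, as a bonus, yields an explicit summable rate on the increment variances that the paper does not extract.
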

\begin{proof}
We first show that $m_t$ is a martingale. It is clearly an adpated process. Next, consider 
\[
\mathbb{E}_{t-1} [m_t - m_{t-1}] = \vec{p}\cdot \mathbb{E}_{t-1}\big[  \vec{\gamma}^T_{t}\circ L_{t-1}  -  \mathbb{E}_{t-1}[\vec{\gamma}^T_{t}\circ L_{t-1}]\big]\vec{x}_{t-1} +\vec{ p} \cdot \mathbb{E}_{t-1}[\vec{\gamma}^T_{t}\circ \vec{w}_{t}] .
\]
The first term is clearly $0$. We next note that:
\begin{align*}
\mathbb{E}_{t-1}[( \vec{\gamma}^T_{t}\circ \vec{w}_{t})^i] &=  \sum_{e \rightarrowtail i}   \mathbb{E}[ \frac{1}{g_t^i}\tilde{w}_{t}^e S_{t}^e | \mathcal{F}_{t-1}]\\
& =  \sum_{e \rightarrowtail i} \mathbb{E}[ \frac{1}{g_t^i} S_{t}^e \mathbb{E} [\tilde{W}_{t}^e | \sigma(\mathcal{F}_{t-1}, \sigma(\psi_t))]| \mathcal{F}_{t-1}]\\
&=  0.
\end{align*}

Lastly, note that
\[
m_t = a_t-a_0-s_t,
\] 
so that
\[
\|m_t\|_{\mathcal{L}^2} \leq \|a_t\|_{\mathcal{L}^2}+\|a_0\|_{\mathcal{L}^2}+\|s_t\|_{\mathcal{L}^2}.
\]
$a_0$ is a constant, $a_t$ is a.s. bounded by virtue of $0 \leq x_t \leq 1,$ and $\|s_t\|_{\mathcal{L}^2}$ is bounded since $s_t$ converges in $\mathcal{L}^2.$ Thus $m_t$ is bounded in $\mathcal{L}^2$, proving the theorem.
\end{proof}

\section{Decay of Disagreement}
The goal of this section is to show that the component of $\vec{x}_t$ corresponding to any differing opinions converges to $0$. Let $\vec{z}_t := \vec{x}_t - a_t \vec{1}$ represent this component of the opinion vector. We would like to show that

\begin{lemma}{\label{decaymain}}
\[
\mathbb{E}[\|\vec{z}_t\|^2] \rightarrow 0
\]
\end{lemma}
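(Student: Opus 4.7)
The plan is to establish a supermartingale-type inequality for a weighted norm of $\vec{z}_t$ and then close it via a Gronwall-type estimate. Because each $L_t$ has row sums zero, $L_t\vec{x}_t = L_t\vec{z}_t$; because $\vec{p}\cdot\vec{z}_t = 0$ by construction of $a_t$, the projection $\Pi := I-\vec{1}\vec{p}$ satisfies $\Pi\vec{z}_t = \vec{z}_t$. Substituting the SHE update from Proposition \ref{SHE} and projecting off the consensus direction yields the one-step recursion
\[
\vec{z}_{t+1} = \vec{z}_t + \Pi\big(\vec{\gamma}_{t+1}^T \circ L_t\big)\vec{z}_t + \Pi\big(\vec{\gamma}_{t+1}^T \circ \vec{w}_{t+1}\big).
\]

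To exploit the spectral gap cleanly I would pass to a weighted inner product in which $L$ is self-adjoint. Let $E = \mathrm{diag}(d_1,\dots,d_{|\mathcal{V}|})$ as in the proof of Lemma \ref{diag}, and define $\|\vec{v}\|_*^2 := \vec{v}^T E^2 \vec{v}$. Since $ELE^{-1}$ is symmetric, $L$ is self-adjoint with respect to $\langle \cdot,\cdot\rangle_*$; its non-zero eigenvalues $\mu_i$ are therefore real, and Lemma \ref{pfspec} gives $\mu_i \leq -\lambda$ for $i>1$. One checks that $\Pi$ coincides with the $\langle\cdot,\cdot\rangle_*$-orthogonal projection onto $\mathrm{span}(\vec{1})^\perp$, so $\langle\vec{z}_t, L\vec{z}_t\rangle_* \leq -\lambda\|\vec{z}_t\|_*^2$ and $\|\Pi\vec{v}\|_* \leq \|\vec{v}\|_*$. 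Taking $\mathbb{E}_t[\|\vec{z}_{t+1}\|_*^2]$, the cross term vanishes by the same conditional-mean argument used in Lemma \ref{mconverges} (condition further on $\sigma(\psi_{t+1})$ to make $\vec{w}_{t+1}$ mean zero); expanding the square, using $\mathbb{E}_t[\vec{\gamma}_{t+1}^T \circ L_t] = \tfrac{1}{t+1}L + \Delta_t$, and applying the crude bounds $\|L_t\|=O(1)$ and $\|\vec{\gamma}_{t+1}\|_\infty=O(1/t)$ on the high-probability event $C_t$ from Lemma \ref{delta} produces
\[
\mathbb{E}\|\vec{z}_{t+1}\|_*^2 \leq \Big(1-\tfrac{2\lambda}{t+1}\Big)\mathbb{E}\|\vec{z}_t\|_*^2 + O\big(t^{-5/4}\big),
\]
where the error absorbs an $O(t^{-2})$ quadratic remainder in $L_t$, an $O(t^{-2})$ noise contribution, and a perturbation term controlled using the a.s.\ boundedness of $\vec{z}_t$ and the estimate $\mathbb{E}\|\Delta_t\|\lesssim t^{-5/4}$ that follows from Lemma \ref{delta} by Cauchy-Schwarz.

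A standard Gronwall/Duhamel analysis of this linear recursion yields $\mathbb{E}\|\vec{z}_t\|_*^2 \lesssim t^{-\min(2\lambda,\,1/4)}$ (possibly with a logarithm at the boundary), and since $\|\vec{v}\|^2 \leq d_{\min}^{-2}\|\vec{v}\|_*^2$ the same decay transfers to the Euclidean norm, giving $\mathbb{E}\|\vec{z}_t\|^2\to 0$. The main obstacle is the bookkeeping in the one-step expansion of $\mathbb{E}_t[\|\vec{z}_{t+1}\|_*^2]$: isolating the linear contractive term $-\tfrac{2\lambda}{t+1}\|\vec{z}_t\|_*^2$ from the quadratic and perturbation remainders requires the self-adjoint framework provided by $\|\cdot\|_*$ (so that non-orthogonal cross terms do not destabilize the bound), and the second-moment estimate on $\Delta_t$ from Lemma \ref{delta} is essential to ensure the deterministic approximation error of $\Lambda_t$ by $A_{t+1}$ is genuinely smaller than the contraction rate.
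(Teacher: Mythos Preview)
Your argument is correct, and it takes a genuinely different route from the paper's proof. The paper expands the Duhamel solution $\vec{z}_{t+1}=\sum_{j}PQP^{-1}\big[\prod_{k=j}^{t}\Lambda_k\big](\vec{\gamma}_j^T\circ\vec{w}_j)$, blocks the product into windows of length $\tau=\lceil t^{1/4}\rceil$, and uses a Ces\`aro/law-of-large-numbers argument (Lemmas~\ref{cesaro}--\ref{thetasmall}) to replace each block by $A_k=I+\tfrac{1}{k}L$ on a high-probability event $E_t$; the projected operator norm then decays by Lemma~\ref{opnorm}, while the tail $j>j_0$ is handled by summability of $\mathbb{E}\|\vec{\gamma}_j\|^2$. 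Your approach bypasses all of the blocking machinery by working with the one-step recursion for $\|\vec{z}_t\|_*^2$ in the degree-weighted inner product that makes $L$ self-adjoint; the key observation that $p_i\propto d_i^2$ (so that $\Pi=I-\vec{1}\vec{p}$ is exactly the $\langle\cdot,\cdot\rangle_*$-orthogonal projection off $\vec{1}$) lets you extract the clean contraction factor $1-\tfrac{2\lambda}{t+1}$, with the remaining error terms controlled by Lemma~\ref{delta} and the concentration of $g_t$. Your route is shorter and yields an explicit rate $\mathbb{E}\|\vec{z}_t\|^2\lesssim t^{-\min(2\lambda,1/4)}$, which the paper only conjectures (Section~6); the paper's approach, on the other hand, produces a pathwise operator-norm bound on $QP^{-1}\prod_k\Lambda_k$ on the good event $E_t$, information that is finer than a second-moment estimate and potentially useful for almost-sure or distributional refinements.
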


\subsection{Preliminary Discussion}

We develop our approach to a proof as follows. With $Q := \text{diag}(0,1,...,1)$, it's clear that $\vec{z}_t = PQP^{-1} \vec{x}_t$, where $P$ is the matrix of eigenvectors of $L$. Further, using the sum-product solution of the SHE from Proposition \ref{SHE}, we have that
\begin{align*}
\vec{z}_t &=  \sum_{j=0}^t PQP^{-1} [\Pi_{k=j}^{t-1} \Lambda_k ] (\vec{\gamma}_j^T \circ \vec{w}_j).
\end{align*}

The intuition for why $\vec{z}_t$ is small is as follows: at each past timestep $j \leq t$, a random 'blip' $\vec{\gamma}_j^T \circ \vec{w}_j$ was introduced. In subsequent time steps $k \geq j$, this blip was smoothed by repeated application of the $\Lambda_k$ matrices. Now, as argued in the previous section (see Lemma \ref{delta}), $\mathbb{E}[\Lambda_k] \approx A_k.$ Using $PQP^{-1}$ to project out the Perron-Frobenius eigenvalue $1$ of $A_k$ (corresponding to consensus), we get eigenvalues whose products decay sufficiently rapidly. So, sufficiently old blips are dampened by products of small eigenvalues with many factors, while newer blips will be small because the vector norm of $\vec{\gamma}_j$ is expected to decrease as $j$ increases. 

An issue with the above heuristic, however, is that random draws of $\vec{\gamma}_{k+1}^T \circ L_k$ are not close to $\frac{1}{k}L$ (even though they approximately agree in expectation). This is circumvented by noting that the $\vec{\gamma}^T_{k+1}\circ L_k$ are Ces\`aro-summable with limit proportional to $L$: we expand the product $\Pi_{k=j}^{t-1} \Lambda_k  = \Pi_{k=j}^{t-1} (I + \vec{\gamma}_{k+1}^T \circ L_k) $, show that the leading order terms (i.e. those linear in the dampened diffusion matrix) are proportional to $L$ due to a law of large numbers effect, and show that the lower order terms decay sufficiently rapidly because they have many factors of $\vec{\gamma}$. 

More precisely: we group the $t-j$ factors in the product $\Pi_{k=j}^{t-1} \Lambda_k$ into subgroups of size $\tau := \lceil t^{1/4} \rceil.$ This $\tau$ is large enough for the law of large numbers to kick in (allowing us to replace the group's average of the $\vec{\gamma}_{k+1}^T \circ L_k$ with a matrix proportional to $L$), but small enough so that there are enough factors of $L$ for the decay of the product of the non-dominant eigenvalues to be severe. Note that $j$ needs to be sufficiently small so that we have enough factors of $\Lambda$ to work with. With this in mind, we will separate the sum defining $z_t$ into $j \leq j_0$ and $j > j_0$ (for a value of $j_0$ to be specified later). The $j \leq j_0$ sum witnesses $PQP^{-1}  [\Pi_{k=j}^{t-1} \Lambda_k ]$ to have sufficiently small operator norm, while the $ j > j_0$ sum is small because we expect $\vec{\gamma}_j$ to be small at such late values of $j$. This heuristic is illustrated in Figure 2.

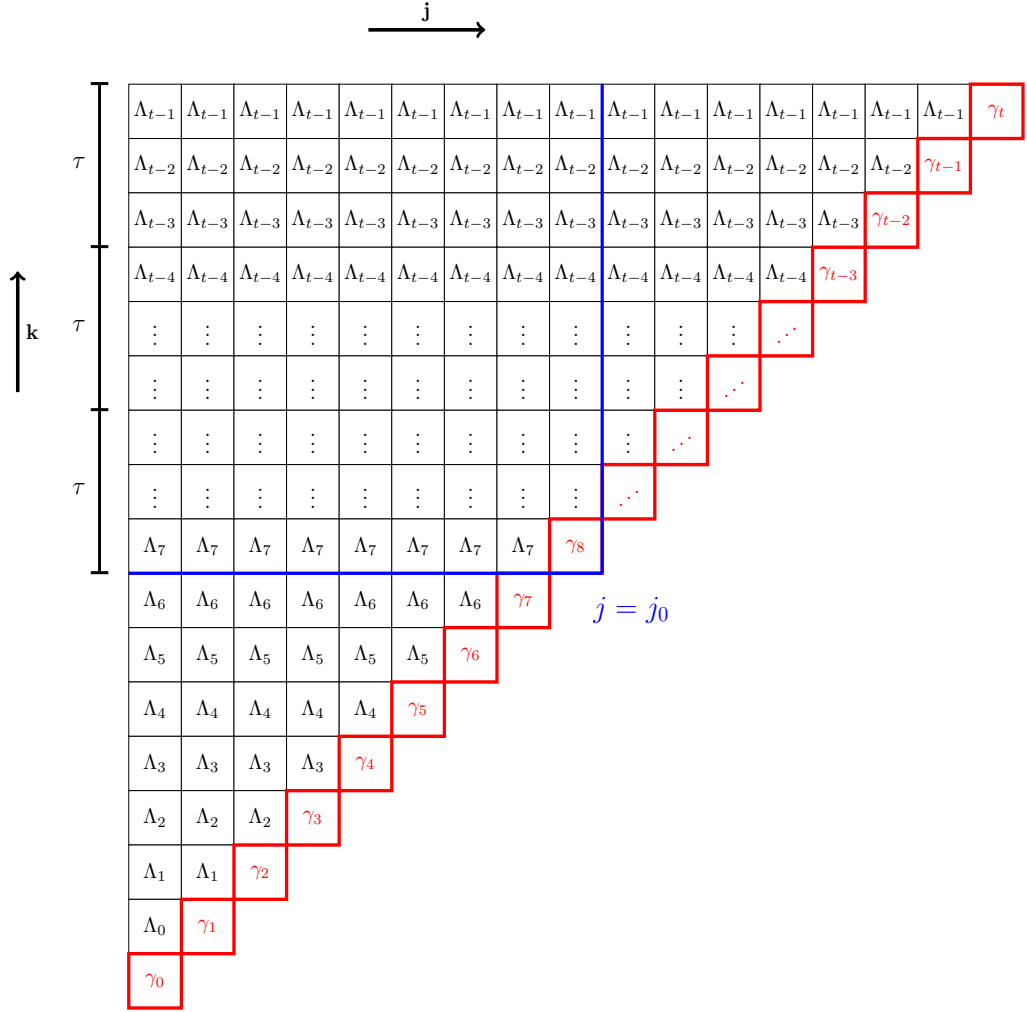
\begin{figure}
\resizebox{13.5cm}{13.5cm}{%
\begin{tikzpicture} 
\begin{centering}
\begin{scope}
	\def\max{17}
	\def\size{0.9}
      \foreach \p [count = \r] in {1,...,\max}{
	\tikzmath{\temp =int( \max - \p + 1);}
        \foreach \i in {1,...,\temp}{
	\tikzmath{\iret =int(\i-1);}
	\tikzmath{\pret =int(\p-1);}
	\tikzmath{\tempdoubleret =int(\temp-2);}
	\tikzmath{\tempret =int(\temp-1);}

	\ifthenelse{\p = 1}{\ifthenelse{\i < \max}{\draw[black,fill=white] (\size*\i, -\size*\r) rectangle  (\size*\i+\size, -\size-\size*\r)node[pos=.5, ] { $\Lambda_{t-1}$};}{\draw[red,fill=white, ultra thick] (\size*\i, -\size*\r) rectangle  (\size*\i+\size, -\size-\size*\r)node[pos=.5, ] { $\gamma_{t}$};}}{
		\ifthenelse{\p < 5}{\ifthenelse{\i < \temp}{\draw[black,fill=white] (\size*\i, -\size*\r) rectangle  (\size*\i+\size, -\size-\size*\r)node[pos=.5, ] { $\Lambda_{t-\p}$};}{\draw[red,fill=white, ultra thick] (\size*\i, -\size*\r) rectangle  (\size*\i+\size, -\size-\size*\r)node[pos=.5, ] { $\gamma_{t-\pret}$};}}{
		\ifthenelse{\p < 9}{\ifthenelse{\i < \temp}{\draw[black,fill=white] (\size*\i, -\size*\r) rectangle  (\size*\i+\size, -\size-\size*\r)node[pos=.5, ] {  $\vdots$};}{\draw[red,fill=white, ultra thick] (\size*\i, -\size*\r) rectangle  (\size*\i+\size, -\size-\size*\r)node[pos=.5, ] {  $\iddots$};}}{
			\ifthenelse{\i < \temp}{\draw[black,fill=white] (\size*\i, -\size*\r) rectangle  (\size*\i+\size, -\size-\size*\r)node[pos=.5, ] { $ \Lambda_{\tempdoubleret}$};}{\draw[red,fill=white, ultra thick] (\size*\i, -\size*\r) rectangle  (\size*\i+\size, -\size-\size*\r)node[pos=.5, ] {  $\gamma_{\tempret}$};}
			}
		}
	}

        }
      }
	
\draw  [line width=0.5mm, black ] (.4,-0.89) edge (.4,-9) node{} [right];

	\draw  [line width=0.5mm, black ] (.25,-.89) edge (.55,-.89) node{} [right];
	\draw  [line width=0.5mm, black ] (.25,-3.59) edge (.55,-3.59) node{} [right];
	\draw  [line width=0.5mm, black ] (.25,-6.29) edge (.55,-6.29) node{} [right];
	\draw  [line width=0.5mm, black ] (.25,-8.99) edge (.55,-8.99) node{} [right];

	\draw (0.05,-2.2) node{\large $\tau$};
	\draw (0.05,-4.9) node{\large $\tau$};
	\draw (0.05,-7.6) node{\large $\tau$};
	 \draw[->,line width = 0.6mm] (5,0) to ["$\textbf{j}$"] (7,0) ;
	\draw[<-,line width = 0.6mm] (-1,-4) to ["$\textbf{k}$"] (-1,-6) ;
	\draw [line width=.55mm, blue ] (9,-.9) -- (9,-9) node{} [right];
	\draw [line width=.55mm, blue ] (0.9,-9) -- (9,-9) node{} [right];
	\draw[blue] (9.5,-9.6) node{ \Large $j=j_0$};

  \end{scope}
\end{centering}
\end{tikzpicture}
} 
\caption{Schematic for the solution to the SHE: $\vec{x}_t = \sum_{j=0}^t [\Pi_{k=j}^{t-1} \Lambda_k ] (\vec{\gamma}_j^T \circ \vec{w}_j)$. $k$ parametrizes the factors in the product, $j$ parametrizes the terms in the sum. See the discussion above. We write $\gamma$ instead of $\gamma \circ w$ to save space.}
\end{figure}

For fixed $t$, define $r$ to be the remainder of $t$ divided by $\tau$, define $j_0 := r + \tau^2$, and let $H_{k,t}$ represent the aggregate effects of the $\Lambda$ factors from the $\tau$-window indexed by $k$. That is, for $1 \leq k \leq \frac{t-r}{\tau}$:
\[
H_{k,t} = [\Pi_{j=r+(k-1)\tau+1}^{r+k\tau} \Lambda_j ]
\]
so that, for sufficiently large $t$ and $j \leq j_0$,
\[
\Pi_{k = j}^t \Lambda_k = \big[\Pi_{k = \tau + 1}^{\frac{t-r}{\tau}} H_{k,t} \big] \big[\Pi_{k = j}^{j_0} \Lambda_k \big]
\]

\subsection{Good and Bad Events}
The above intuition only holds on 'good events' where the long-term randomness of the $\psi$ variables is close to expectation. In particular, we use this assumption when we assume $\vec{\gamma}_j$ to be small for large $j$, and that the Ces\`aro mean of $\vec{\gamma}_{k+1}^T \circ L_k$ is roughly proportional to $L$. For the rest of the paper we fix $\epsilon \ll \frac{1}{2}$, and for $\tau + 1 \leq k \leq \frac{t-r}{\tau}$, we define these good events as follows:
\begin{align*}
A_{k,t} &= \Bigl\{ |g_s^i - g_0^i - \frac{d_i}{E}k \tau | \leq (k \tau)^{\frac{1}{2} + \epsilon} \, : \, \forall i \in \mathcal{V}, \, \forall s \in \{(k-1)\tau + r + 1,..., k \tau+r\}\Bigr\} \\
B_{k,t} &= \{|(\sum_{s = (k-1) \tau+r+1}^{k \tau+r} S^e_s ) - \frac{1}{E} \tau| \leq \tau^{\frac{1}{2} + \epsilon}  \, : \, \forall e \in \mathcal{E}\}\\
E_t &= \cap_{k = \tau + 1}^{\frac{t-r}{\tau}} (A_{k,t} \cap B_{k,t}).
\end{align*}
$A_{k,t}$ corresponds to the event that, for all $s$ in the $\tau$-window indexed by $k$, $g_s - g_0$ is close to the expectation of $g$ at the point $k \tau$ (which lies in the window). The event $B_{k,t}$ represents that, within the $\tau$ window indexed by $k$, the amount of conversations each edge hosts is close to its expectation. $E_t$ is the intersection of the $A$ and $B$ events for all windows $\tau + 1 \leq k \leq \frac{t-r}{\tau}$.

We first establish that the union of the bad events have exponentially small probability in $t$.
\begin{lemma}{\label{badevents}}
For $\epsilon < \frac{1}{2},$ there exist positive constants $c_1$ and $c_2$ such that, for sufficiently large $t$,
\[
\mathbb{P}(E_t^c) \leq c_1 \exp(-c_2 \tau^{2 \epsilon}).
\]
\end{lemma}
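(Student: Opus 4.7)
The plan is a routine union bound combined with the Hoeffding-type inequality of Lemma \ref{AS} applied to the binomial random variables underlying $g_s^i$ and $\sum_s S_s^e$. For each fixed window index $k$, both $\mathbb{P}(A_{k,t}^c)$ and $\mathbb{P}(B_{k,t}^c)$ will be exponentially small in $\tau^{2\epsilon}$, and since $k$ ranges over at most $t/\tau \leq \tau^3$ values, the resulting polynomial prefactor will be absorbed into the exponential for large $t$.

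\emph{Bounding $\mathbb{P}(B_{k,t}^c)$.} For each fixed edge $e$, the sum $\sum_{s=(k-1)\tau+r+1}^{k\tau+r} S_s^e$ is a sum of $\tau$ i.i.d.\ Bernoulli$(1/|\mathcal{E}|)$ variables (since $\psi_s$ is uniform on $\mathcal{E}$ and independent across $s$), hence Binomial$(\tau,1/|\mathcal{E}|)$ with mean $\tau/|\mathcal{E}|$. Applying Lemma \ref{AS} with $a=\tau^{1/2+\epsilon}$ and $n=\tau$ gives probability at most $2\exp(-2\tau^{2\epsilon})$, and a union bound over $|\mathcal{E}|$ edges yields $\mathbb{P}(B_{k,t}^c)\leq 2|\mathcal{E}|\exp(-2\tau^{2\epsilon})$.

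\emph{Bounding $\mathbb{P}(A_{k,t}^c)$.} For fixed $i\in\mathcal{V}$ and $s\in\{(k-1)\tau+r+1,\ldots,k\tau+r\}$, the random variable $g_s^i-g_0^i$ is Binomial$(s,d_i/|\mathcal{E}|)$ with mean $sd_i/|\mathcal{E}|$, while $A_{k,t}$ compares it against the target $k\tau d_i/|\mathcal{E}|$. The offset between these two centers is at most $d_i\tau/|\mathcal{E}|\leq\tau$ since $|s-k\tau|\leq\tau$. The constraint $k\geq\tau+1$ gives $k\tau\geq\tau^2$, so the tolerance satisfies $(k\tau)^{1/2+\epsilon}\geq\tau^{1+2\epsilon}$, which dominates $\tau$ for large $t$. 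Consequently $A_{k,t}^c$ forces the binomial to deviate from its own mean by at least $(k\tau)^{1/2+\epsilon}/2$, and Lemma \ref{AS} with $n=s\leq 2k\tau$ yields
\[
\mathbb{P}\bigl(\bigl|g_s^i-g_0^i-sd_i/|\mathcal{E}|\bigr|>(k\tau)^{1/2+\epsilon}/2\bigr)\leq 2\exp\!\bigl(-(k\tau)^{2\epsilon}/4\bigr)\leq 2\exp(-\tau^{4\epsilon}/4).
\]
Union bounding over the $|\mathcal{V}|$ vertices and $\tau$ time indices $s$ gives $\mathbb{P}(A_{k,t}^c)\leq 2|\mathcal{V}|\tau\exp(-\tau^{4\epsilon}/4)$.

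\emph{Assembly.} Summing the two bounds over the at most $t/\tau\leq\tau^3$ values of $k$,
\[
\mathbb{P}(E_t^c)\leq\tau^3\bigl(2|\mathcal{V}|\tau\exp(-\tau^{4\epsilon}/4)+2|\mathcal{E}|\exp(-2\tau^{2\epsilon})\bigr).
\]
Since $\epsilon<1/2$ implies $4\epsilon>2\epsilon$, the stretched-exponential factor $\exp(-\tau^{4\epsilon}/4)$ decays faster than any power of $\tau^{2\epsilon}$, so the polynomial prefactors $\tau^3,\tau^4$ are absorbed into the exponent for $t$ large, giving $\mathbb{P}(E_t^c)\leq c_1\exp(-c_2\tau^{2\epsilon})$ for appropriate $c_1,c_2>0$.

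The only nonroutine point is reconciling the binomial mean $sd_i/|\mathcal{E}|$ inside the window with the fixed target $k\tau d_i/|\mathcal{E}|$ appearing in the definition of $A_{k,t}$; this is handled by the elementary observation that for $k\geq\tau+1$ the tolerance $(k\tau)^{1/2+\epsilon}\geq\tau^{1+2\epsilon}$ strictly dominates the mean offset $d_i\tau/|\mathcal{E}|$. Everything else is bookkeeping with Lemma \ref{AS} and union bounds.
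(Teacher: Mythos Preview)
Your proposal is correct and follows the same overall strategy as the paper: union bound over windows $k$, then apply Lemma~\ref{AS} to the binomials underlying $B_{k,t}$ and $A_{k,t}$, and finally absorb the polynomial-in-$\tau$ prefactor into the stretched exponential.

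The one genuine difference is in the treatment of $A_{k,t}$. You union bound over all $\tau$ time indices $s$ in the window, incurring an extra factor of $\tau$ in $\mathbb{P}(A_{k,t}^c)$. The paper instead observes that $s\mapsto g_s^i$ is monotone nondecreasing, so the two-sided constraint in $A_{k,t}$ is equivalent to a lower bound at the left endpoint $s_0=(k-1)\tau+r+1$ together with an upper bound at the right endpoint $s_1=k\tau+r$; this reduces the $\tau$ time points to just two and yields $\mathbb{P}(A_{k,t}^c)\leq 4|\mathcal{V}|\exp(-\tfrac14\tau^{4\epsilon})$ without the extra $\tau$. Either way the polynomial prefactor is swallowed by the exponential, so the final statement is the same; the paper's monotonicity trick is a bit cleaner, while your brute-force union bound is more mechanical and equally valid. (Incidentally, your clause ``$\epsilon<1/2$ implies $4\epsilon>2\epsilon$'' is true but the hypothesis is irrelevant---$4\epsilon>2\epsilon$ holds for any $\epsilon>0$.)
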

\begin{proof}
\begin{align*}
\mathbb{P}(E_t^c) &= \mathbb{P}\big((\cap_{k = k_\text{min}}^{k_\text{max}}(A_{k,t} \cap B_{k,t}))^c\big) \\
&= \mathbb{P}\big((\cup_{k = k_\text{min}}^{k_\text{max}}(A^c_{k,t} \cup B^c_{k,t}))\big) \\
&\leq \sum_{k=k_\text{min}}^{k_\text{max}} \big( \mathbb{P}(A^c_{k,t})+\mathbb{P}(B^c_{k,t})\big),
\end{align*}
where we've invoked a union bound in the last line.

Now, for all $t\geq 0, k \geq 1$:
\[
\mathbb{P}(B_{k,t}^c) \leq 2|\mathcal{E}| \exp(-2 \tau^{2 \epsilon}).
\]
This follows directly from Lemma \ref{AS}, with union bound.

Similarly, for $t$ sufficiently large and $k \geq \tau$:
\[
\mathbb{P}(A^c_{k,t}) \leq 4 |\mathcal{V}| \exp(-\frac{1}{4} (k \tau)^{2 \epsilon}) \leq 4 |\mathcal{V}| \exp(-\frac{1}{4} \tau^{4 \epsilon}).
\]
The proof of this claim is as follows:
Note that 
\[
A_{k,t} = \{ g_{s_0}^i - g_0^i - \frac{d_i}{\mathcal{E}}k \tau  \geq -(k \tau)^{\frac{1}{2} + \epsilon} \, : \, \forall i \in \mathcal{V}\}\cap\{ g_{s_1}^i - g_0^i - \frac{d_i}{\mathcal{E}}k \tau  \leq (k \tau)^{\frac{1}{2} + \epsilon} \, : \, \forall i \in \mathcal{V}\}
\]
where $s_0$ and $s_1$ represent the endpoints for a particular $\tau$-window: \newline $s_0 = (k-1)\tau + r + 1$ and $s_1 = k\tau + r.$ Now, we have that
\begin{align*}
&\mathbb{P}\big(\{ g_{s_0}^i - g_0^i - \frac{d_i}{\mathcal{E}}k \tau  \geq -(k \tau)^{\frac{1}{2} + \epsilon} \, : \, \forall i \in \mathcal{V}\}^c\big) \\
&\leq |\mathcal{V}|\mathbb{P}\big(\{ g_{s_0}^i - g_0^i - \frac{d_i}{\mathcal{E}}k \tau  \leq -(k \tau)^{\frac{1}{2} + \epsilon}\}\big)\\
&=|\mathcal{V}|  \mathbb{P}\big(\{ g_{s_0}^i - g_0^i - \frac{d_i}{\mathcal{E}}s_0  \leq -(k \tau)^{\frac{1}{2} + \epsilon} + \frac{d_i}{\mathcal{E}}(\tau-r-1)\}\big) \Big)\\
&\leq|\mathcal{V}|  \mathbb{P}\big(\{ g_{s_0}^i - g_0^i - \frac{d_i}{\mathcal{E}}s_0  \leq -\frac{1}{2}(k \tau)^{\frac{1}{2} + \epsilon} \}\Big)\\
&\leq 2 |\mathcal{V}| \exp(-\frac{1}{4} (k \tau)^{2 \epsilon})
\end{align*}
Similarly, for the second event,
\begin{align*}
&\mathbb{P}\big(\{ g_{s_1}^i - g_0^i - \frac{d_i}{E}k \tau  \leq (k \tau)^{\frac{1}{2} + \epsilon} \, : \, \forall i \in \mathcal{V}\}^c\big) \\
&\leq | \mathcal{V}|\mathbb{P}\big(\{ g_{s_1}^i - g_0^i - \frac{d_i}{ \mathcal{\mathcal{E}}}k \tau  \geq (k \tau)^{\frac{1}{2} + \epsilon}\}\big)\\
&=|\mathcal{V}|  \mathbb{P}\big(\{ g_{s_1}^i - g_0^i - \frac{d_i}{\mathcal{E}}s_1  \geq (k \tau)^{\frac{1}{2} + \epsilon} - \frac{d_i}{\mathcal{E}}r\}\big) \Big)\\
&\leq |\mathcal{V}|  \mathbb{P}\big(\{ g_{s_1}^i - g_0^i - \frac{d_i}{\mathcal{E}}s_1  \geq\frac{1}{2} (k \tau)^{\frac{1}{2} + \epsilon}\}\big) \Big)\\
&\leq 2 |\mathcal{V}| \exp(-\frac{1}{4} (k \tau)^{2 \epsilon}).
\end{align*}
This concludes the proof of the above claim. We now finish by noting that
\[
\big( \mathbb{P}(A^c_{k,t})+\mathbb{P}(B^c_{k,t})\big) \leq 4(|\mathcal{V}|+|\mathcal{E}|)\exp(-\frac{1}{4} \tau^{2 \epsilon}),
\]
so that, for sufficiently large $t$:
\begin{align*}
\mathbb{P}(E_t^c) &\leq \sum_{k=\tau+1}^{\frac{t-r}{\tau}} \big( \mathbb{P}(A^c_{k,t})+\mathbb{P}(B^c_{k,t})\big) \\
&\leq  4(|\mathcal{V}|+|\mathcal{E}|)\frac{t}{\tau} \exp(-\frac{1}{4} \tau^{2 \epsilon})\\
&\leq  4(|\mathcal{V}|+|\mathcal{E}|) \exp(-\frac{1}{8} \tau^{2 \epsilon})
\end{align*}
\end{proof}
\subsection{Law of Large Numbers for Iterated Diffusion}
We now show that, on good events, $H_{k,t}$ (representing the time-evolution over the $\tau$-window indexed by $k$) window is close to $A_k$. We begin by analyzing the leading-order terms in the product. The following lemma shows that, on good events, the dampened Laplacian matrices are Ces\`aro-summable, with average close to the influence matrix.
\begin{lemma}{\label{cesaro}}
Fix $\epsilon \ll \frac{1}{2}$. There exists a constant $c$ such that, for $t$ sufficiently large, $k \geq \tau$ and on $A_{k,t} \cap B_{k,t}$:
\[
\|\big(\sum_{j = (k-1)\tau+z+1}^{k \tau+z} \vec{\gamma}_{j+1}^T \circ L_j\big) - \frac{1}{k} L \| \leq \frac{c}{k \tau^{1/2-\epsilon}}
\]
\end{lemma}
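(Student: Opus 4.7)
The plan is to bound the operator norm of the difference matrix entrywise, since for the fixed graph $G$ both $L$ and $\vec{\gamma}_{s+1}^T \circ L_s$ have a bounded, deterministic sparsity pattern, so the Euclidean norm is controlled up to a graph-dependent constant by the maximum of the entries. For $p \sim q$ with $p \neq q$,
\[
(\vec{\gamma}_{s+1}^T \circ L_s)^{pq} = \frac{S^{pq}_{s+1}}{g^{p}_{s+1}} \cdot \frac{g_s^{q}}{g_s^{p} + g_s^{q}},
\]
and the target reduces to showing $\sum_{s\in W}(\vec{\gamma}_{s+1}^T \circ L_s)^{pq} = \tfrac{L^{pq}}{k} + O\!\left(\tfrac{1}{k\tau^{1/2-\epsilon}}\right)$, where $W$ is the $\tau$-window in the sum. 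Diagonal entries are then free: both $\sum_s \vec{\gamma}_{s+1}^T \circ L_s$ and $\tfrac{1}{k}L$ annihilate $\vec{1}$ (Hadamard multiplication by $\vec{\gamma}_{s+1}^T$ preserves row sums), so the $(p,p)$ error is the negative sum of the off-diagonal row-$p$ errors, already controlled.

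For the off-diagonal estimate I would first invoke $A_{k,t}$ to extract the leading behavior of the $g$-factors. On $A_{k,t}$, for every $s\in W$ one has $g_s^{p} = \tfrac{d_p}{|\mathcal{E}|} k\tau + O((k\tau)^{1/2+\epsilon})$; the $O(\tau)$ drift of $s$ across the window and the $O(1)$ initial condition $g_0^p$ are absorbed into the error, valid because $k \geq \tau$ forces $\tau \lesssim (k\tau)^{1/2+\epsilon}$. Consequently
\[
\frac{1}{g^{p}_{s+1}} = \frac{|\mathcal{E}|}{d_p\, k\tau}\bigl(1+O((k\tau)^{-1/2+\epsilon})\bigr),\qquad \frac{g_s^{q}}{g_s^{p}+g_s^{q}} = \frac{d_q}{d_p+d_q}\bigl(1+O((k\tau)^{-1/2+\epsilon})\bigr),
\]
whose product equals $\tfrac{|\mathcal{E}|\,L^{pq}}{k\tau}\bigl(1+O((k\tau)^{-1/2+\epsilon})\bigr)$.

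Pulling this deterministic prefactor out of the sum leaves $\sum_{s\in W} S^{pq}_{s+1}$, which is precisely the quantity controlled by $B_{k,t}$ (up to an additive $O(1)$ coming from the off-by-one index shift between the window for $g$ and for $S$). On $B_{k,t}$, $\sum_{s\in W} S^{pq}_{s+1} = \tau/|\mathcal{E}| + O(\tau^{1/2+\epsilon})$, so $\tfrac{|\mathcal{E}|}{\tau}\sum_s S^{pq}_{s+1} = 1 + O(\tau^{-1/2+\epsilon})$. Multiplying yields $\sum_{s\in W}(\vec{\gamma}_{s+1}^T \circ L_s)^{pq} = \tfrac{L^{pq}}{k}\bigl(1+O(\tau^{-1/2+\epsilon})\bigr)$, which is the claimed entrywise bound.

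The only real work is error bookkeeping: the multiplicative error from $A_{k,t}$ is $O((k\tau)^{-1/2+\epsilon})$ while the error from $B_{k,t}$ is $O(\tau^{-1/2+\epsilon})$, and the latter is the binding one for every $k\geq 1$ — that is what explains why the bound scales as $\tfrac{1}{k\tau^{1/2-\epsilon}}$ rather than $\tfrac{1}{k(k\tau)^{1/2-\epsilon}}$. No step is conceptually deep; the hypothesis $k\geq \tau$ is used exactly once, to ensure that the width of the window $W$ is small compared to the deviation scale $(k\tau)^{1/2+\epsilon}$ guaranteed by $A_{k,t}$, and the hypothesis that $t$ is sufficiently large is used to absorb lower-order constants like $g_0^p$ and the $1/t$ terms into the errors.
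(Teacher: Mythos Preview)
Your proposal is correct and follows essentially the same route as the paper: both arguments bound the difference entrywise for $p\sim q$, using $A_{k,t}$ to replace each factor $g_s^p$ by $\tfrac{d_p}{|\mathcal{E}|}k\tau$ up to an $O((k\tau)^{-1/2+\epsilon})$ relative error and $B_{k,t}$ to replace $\sum_{s\in W}S^{pq}_{s+1}$ by $\tau/|\mathcal{E}|$ up to $O(\tau^{1/2+\epsilon})$, then note that the $B_{k,t}$ error is the binding one. Your treatment of the diagonal via row sums and your explicit mention of the off-by-one in the $S$ index are slightly more careful than the paper, which simply writes the upper bound as a chain of inequalities and remarks that the lower bound and the passage to the norm are similar.
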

\begin{proof}
Fix vertices $i \sim \ell$, and consider outcomes on $A_{k,t} \cap B_{k,t}$ only. In the below, the constant $c$ may change from line to line, but will never depend on $t$ or $k$.
\begin{align*}
\big(&\sum_{j = (k-1)\tau+z+1}^{k \tau+z} \gamma_{j+1}^T \circ L_j\big)^{i\ell} = \sum_{j = (k-1)\tau+r+1}^{k \tau+r} \frac{1}{g_{j+1}^i} S_{j+1}^{i \ell} \frac{g_j^\ell}{g_j^i + g_j^\ell}\\
& \leq  \frac{1}{\frac{d_i}{|\mathcal{E}|}k \tau - (k\tau)^{\frac{1}{2} + \epsilon}+g_0^i} \frac{\frac{d_\ell}{|\mathcal{E}|}k \tau + (k\tau)^{\frac{1}{2} + \epsilon}+g_0^\ell}{\frac{d_\ell+d_i}{|\mathcal{E}|}k \tau - 2(k\tau)^{\frac{1}{2} + \epsilon}+g_0^i+g_0^\ell } \sum_{j = (k-1)\tau+r+1}^{k \tau+r} S_{j+1}^{i \ell}\\
& \leq  \frac{1}{\frac{d_i}{|\mathcal{E}|}k \tau - (k\tau)^{\frac{1}{2} + \epsilon}+g_0^i} \frac{\frac{d_\ell}{|\mathcal{E}|}k \tau + (k\tau)^{\frac{1}{2} + \epsilon}+g_0^\ell}{\frac{d_\ell+d_i}{|\mathcal{E}|}k \tau - 2(k\tau)^{\frac{1}{2} + \epsilon}+g_0^i+g_0^\ell } \big(\frac{\tau}{|\mathcal{E}|} + \tau^{\frac{1}{2}+\epsilon} +1\big)\\
& \leq \frac{|\mathcal{E}|}{k \tau} (L^{i \ell} + \frac{c}{(k \tau)^{\frac{1}{2}-\epsilon}})\big(\frac{\tau}{|\mathcal{E}|} + \tau^{\frac{1}{2}+\epsilon} + 1 \big)\\
&= (L^{i \ell} + \frac{c}{(k \tau)^{\frac{1}{2}-\epsilon}})\big(\frac{1}{k} + \frac{c}{k\tau^{\frac{1}{2}-\epsilon}} \big)\\
&\leq \frac{1}{k}L^{i \ell} + \frac{c}{k \tau^{\frac{1}{2}-\epsilon}}.
\end{align*}
The opposite-direction inequality can be proven similarly, giving that 
\[
|\big(\sum_{j = (k-1)\tau+r+1}^{k \tau+r} \vec{\gamma}_{j+1}^T \circ L_j\big)^{i\ell}  - \frac{1}{k}L^{i \ell}| \leq \frac{c}{k \tau^{\frac{1}{2}-\epsilon}}.
\]
From this the lemma easily follows.
\end{proof}

We now use the above lemma to show that the $H$ product matrix over the $k^\text{th}$ $\tau$-window is indeed close to $A_k$ (sub-leading order terms included). Define the difference $\Theta_{k,t} := H_{k,t} - A_k$
\begin{lemma}{\label{thetasmall}}
Fix $\epsilon \ll \frac{1}{2}$. There exists a constant $c$ such that, for sufficiently large $t$, $k \geq \tau$ and on $A_{k,t} \cap B_{k,t}$:
\[
\|\Theta_{k,t}\| \leq \frac{c}{k\tau^{1/2-\epsilon}}
\]
\end{lemma}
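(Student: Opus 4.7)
The plan is to expand $H_{k,t}$ formally as a polynomial in the per-step perturbations $M_j:=\vec{\gamma}_{j+1}^{\,T}\circ L_j$, compare the result directly with $A_k=I+\tfrac{1}{k}L$, and reduce the estimate to two pieces: a linear Ces\`aro defect, already handled by Lemma~\ref{cesaro}, and a higher-order remainder whose smallness follows from the crude bound $\|M_j\|\lesssim 1/(k\tau)$ valid on good events.

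First, writing $\Lambda_j=I+M_j$ and expanding the product over the $k$-th $\tau$-window $W_k:=\{(k-1)\tau+r+1,\ldots,k\tau+r\}$,
\[
H_{k,t}=\prod_{j\in W_k}(I+M_j)=I+\sum_{j\in W_k}M_j + R_{k,t},
\]
where $R_{k,t}$ collects all ordered products of two or more distinct $M_j$'s drawn from $W_k$ (with older factors on the right, per the convention of the paper). Subtracting $A_k$,
\[
\Theta_{k,t}=\Bigl(\sum_{j\in W_k}M_j-\tfrac{1}{k}L\Bigr)+R_{k,t}.
\]
Lemma~\ref{cesaro} bounds the first parenthesis in operator norm by $c/(k\tau^{1/2-\epsilon})$ on $A_{k,t}\cap B_{k,t}$, so everything reduces to showing $\|R_{k,t}\|\lesssim 1/k^2$ on the same event.

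For the remainder, on $A_{k,t}$ the lower bound $g_{j+1}^i\geq \tfrac{d_i}{|\mathcal{E}|}k\tau-(k\tau)^{1/2+\epsilon}+g_0^i$ yields $\gamma_{j+1}^i\lesssim 1/(k\tau)$ uniformly over $i\in\mathcal{V}$ and over $j$ in (or one step past) $W_k$, for $t$ large. Since each $L_j$ has at most four nonzero entries of absolute value at most $1$, its operator norm is bounded by a graph-independent constant, and the Hadamard sub-multiplicativity recorded in the preliminaries gives $\|M_j\|\leq \|L_j\|\,\|\vec{\gamma}_{j+1}\|\lesssim 1/(k\tau)$. Using ordinary operator-norm sub-multiplicativity on each length-$m$ product and the combinatorial count $\binom{\tau}{m}$ for $m$-subsets of $W_k$,
\[
\|R_{k,t}\|\leq \sum_{m=2}^{\tau}\binom{\tau}{m}\Bigl(\frac{c}{k\tau}\Bigr)^{\!m}\leq \sum_{m=2}^{\infty}\frac{1}{m!}\Bigl(\frac{c}{k}\Bigr)^{\!m}\leq \frac{c'}{k^{2}},
\]
for $t$ (hence $k$) sufficiently large. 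Since $k\geq \tau$ implies $1/k^2\leq 1/(k\tau^{1/2-\epsilon})$, this combines with the Ces\`aro bound on the linear defect to produce the stated estimate.

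The main obstacle is conceptual rather than computational: the $M_j$ do not commute, which precludes an immediate ``$H_{k,t}\approx\exp(\sum M_j)$'' manipulation and forces the explicit expansion above. What makes the argument go through cleanly is the quantitative observation that on good events the per-step perturbation $\|M_j\|$ scales like $1/(k\tau)$ rather than $1/\tau$; this extra factor of $1/k$ is exactly what dominates the $\binom{\tau}{m}$ combinatorial blowup and makes the nonlinear remainder absorb into the Ces\`aro error without any need to track factor ordering.
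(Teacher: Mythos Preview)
Your proposal is correct and follows essentially the same route as the paper: expand $H_{k,t}=\prod(I+M_j)$ into its identity, linear, and higher-order pieces, invoke Lemma~\ref{cesaro} for the linear defect, and bound the remainder via $\|M_j\|\lesssim 1/(k\tau)$ together with the binomial count $\binom{\tau}{m}(c/(k\tau))^m\leq (c/k)^m/m!$. Your treatment is in fact slightly more careful than the paper's in flagging that $\gamma_{j+1}$ can land one index past the window covered by $A_{k,t}$, a harmless boundary issue the paper simply ignores.
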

\begin{proof}
The matrix $H_{k,t} = \Pi_{j=(k-1)\tau + r + 1}^{k \tau + r} (I + \vec{\gamma}^T_{j+1} \circ L_j)$ has $\tau$ factors in the product. It can be expanded as a sum: 
\[
H_{k,t} = \sum_{n=0}^\tau h_{k,t,n}
\]
where $h_{k,t,n}$ collects the terms in the expansion with exactly $n$ factors of the dampened laplacian matrix $\vec{\gamma}^T \circ L$. Now, on good events, and for all $s_0 \leq j \leq s_1$ and for all $i \in \mathcal{V}$, we have that 
\[
g_s^i \gtrsim k \tau.
\]
It's also to see that, almost surely, we have that $\|L_j\| \lesssim 1$. Then, using the submultiplicativity of the operator norm with respect to the Hadamard product, we have that, on good events,
\[
\|\vec{\gamma}_{j+1}^T \circ L_j\| \leq \frac{c}{k \tau}
\]
for some constant $c$. So, collecting all terms with $n$ such matrices as factors in the binomial expansion (there are $\binom{\tau}{n}$ of them), we have that
\[
\|h_{k,t,n}\| \leq \binom{\tau}{n}  \big(\frac{c}{k \tau}\big)^n \leq \frac{1}{n!} \big(\frac{c}{k}\big)^n
\]
So, using the previous LEMMA (which says that $\|h_{k,t,0}+h_{k,t,1} - A_k\| \leq \frac{c'}{k \tau^{1/2-\epsilon}}$ for some constant $c'$:
\begin{align*}
\|H_{k,t}- A_k\| &\leq \frac{c'}{k \tau^{1/2-\epsilon}} + \sum_{n=2}^{\tau}  \frac{1}{n!} \big(\frac{c}{k}\big)^n\\
& \leq \frac{c'}{k \tau^{1/2-\epsilon}} + \big(\frac{c}{k}\big)^2\sum_{n=0}^{\tau} \big(\frac{c}{k}\big)^n\\
& \leq  \frac{c'}{k \tau^{1/2-\epsilon}} + \frac{(\frac{c}{k})^2}{1-\frac{c}{k}}\\
& \leq \frac{c'}{k \tau^{1/2-\epsilon}} + \frac{2c^2}{k \tau^{1/2-\epsilon}}\\
&= \frac{c' + 2c^2}{k \tau^{1/2-\epsilon}},
\end{align*}
where we've used that  $\tau$ is large, for example, enough to have $\frac{c}{\tau} \leq \frac{1}{2}$, and that $k \geq \tau.$
\end{proof}

\subsection{Decay of Operator Norm}
Now, recall that 
\[
\Pi_{k = j}^t \Lambda_k = \big[\Pi_{k = \tau + 1}^{\frac{t-r}{\tau}} H_{k,t} \big] \big[\Pi_{k = j}^{ j_0} \Lambda_k \big].
\]
The late ($k > j_0$) $\Lambda_k$ matrices in the product are encapsulated in the $H$ matrices, while we 'chop off' the early ($k \leq j_0$) $\Lambda_k$ matrices. We remove them because $t-j$ may not be divisible by $\tau$ (and thus that we cannot successfully partition all $\Lambda$ into groups of equal size). Note, however, that in the above decomposition, we chop off more than the remainder of $t-j$ divided by $\tau$; this is for later convenience. 

The next Lemma guarantees that these extra, 'loose' factors of $\Lambda$ have bounded norm. We present a straightforward proof which makes use of some simple matrix calculations. It can be noted, however, that this lemma can also be proven by noting that a discrete dynamical system driven by the $\Lambda$ matrices (with no random blips $W$) represent a version of the heat equation where the only randomness is in the edge selection, rather than in the outcome in the conversation, and long-term solutions must be bounded.
\begin{lemma}{\label{detheat}}
For all $0 \leq j \leq t$,
\[
\|\Pi_{k = j}^{t} \Lambda_k \|\leq \sqrt{|\mathcal{V}|}
\]
almost surely.
\end{lemma}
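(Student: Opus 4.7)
The plan is to show that each $\Lambda_k$ is row-stochastic, so that the product $\Pi_{k=j}^t \Lambda_k$ is row-stochastic as well, and then pass from the induced $\ell^\infty$-norm bound to the Euclidean operator norm bound at the cost of the factor $\sqrt{|\mathcal{V}|}$.

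First I would check row-stochasticity of $\Lambda_k = I + \vec{\gamma}_{k+1}^T \circ L_k$. For the off-diagonal entries with $i \sim j$, the entry $\Lambda_k^{ij} = \frac{S_{k+1}^{ij}}{g_{k+1}^i}\frac{g_k^j}{g_k^i+g_k^j}$ is manifestly nonnegative. For the diagonal, $\Lambda_k^{ii} = 1 - \frac{1}{g_{k+1}^i}\sum_{j \sim i} S_{k+1}^{ij}\frac{g_k^j}{g_k^i+g_k^j}$; here exactly one of the $S_{k+1}^{ij}$ is nonzero (and equals $1$) if the chosen edge $\psi_{k+1}$ is incident to $i$ — say $\psi_{k+1} = (i,j^*)$ — in which case the subtracted term equals $\frac{g_k^{j^*}}{(g_k^i+1)(g_k^i+g_k^{j^*})} \leq 1$, while otherwise the subtracted term vanishes. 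So diagonal entries are nonnegative too. Finally, since each row of $L_k$ sums to zero, one has $\sum_j \Lambda_k^{ij} = 1 + \frac{1}{g_{k+1}^i}\sum_j L_k^{ij} = 1$, confirming row-stochasticity.

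Next, because the product of row-stochastic matrices is row-stochastic (nonnegativity is preserved under matrix multiplication of nonnegative matrices, and row sums multiply), the product $M := \Pi_{k=j}^t \Lambda_k$ is row-stochastic. A row-stochastic matrix $M$ satisfies $\|M \vec{v}\|_\infty \leq \|\vec{v}\|_\infty$ for all $\vec{v}$, since each coordinate of $M \vec{v}$ is a convex combination of coordinates of $\vec{v}$.

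To finish, I would use the standard norm comparison on $\mathbb{R}^{|\mathcal{V}|}$: $\|\vec{u}\|_\infty \leq \|\vec{u}\|_2 \leq \sqrt{|\mathcal{V}|}\, \|\vec{u}\|_\infty$. Chaining these gives
\[
\|M \vec{v}\|_2 \leq \sqrt{|\mathcal{V}|}\, \|M \vec{v}\|_\infty \leq \sqrt{|\mathcal{V}|}\, \|\vec{v}\|_\infty \leq \sqrt{|\mathcal{V}|}\, \|\vec{v}\|_2,
\]
so the Euclidean operator norm of $\Pi_{k=j}^t \Lambda_k$ is at most $\sqrt{|\mathcal{V}|}$ almost surely. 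There is no real obstacle here; the only minor subtlety is verifying that the diagonal entry $\Lambda_k^{ii}$ stays nonnegative, which follows from the elementary inequality $g_k^{j^*} \leq (g_k^i+1)(g_k^i+g_k^{j^*})$ noted above, and this is where the $+I$ and the $\frac{1}{g_{k+1}^i}$ normalization really pay off.
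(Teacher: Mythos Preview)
Your proof is correct and follows essentially the same approach as the paper: both establish that each $\Lambda_k$ is nonnegative and row-stochastic (the paper's diagonal check uses $g_{k+1}^i>1$ to get $\frac{1}{g_{k+1}^i}\frac{g_k^{j^*}}{g_k^i+g_k^{j^*}}<1$, which is the same content as your inequality), deduce the product is row-stochastic, and then pass to the Euclidean operator norm. The only cosmetic difference is that the paper bounds each coordinate of $M\vec v$ directly for a unit vector $\vec v$, whereas you route through the $\ell^\infty$ operator norm and the standard $\ell^2$--$\ell^\infty$ comparison; the two are equivalent.
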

\begin{proof}
We first aim to prove that $\Lambda_k$ is nonnegative. The offdiagonal elements are obviously nonnegative, so we focus only on the diagonal. Let $k$ be arbitrary. For any $i$,
\[
\Lambda_k^{ii} = 1 -\frac{1}{g_{k+1}^i}\sum_{j \sim i}S_{k+1}^{ij}\frac{g_k^j}{g_k^i + g_k^j}.
\]
Now, if $S_{k+1}^{ij} = 0$ for all $j \sim i,$ then it's clear that $\Lambda_{k}^{ii} = 1$. Otherwise, let $S_{k+1}^{ij} = 1$ for some $j \sim i.$ Note that this will be the only nonzero term in the sum. In this case, we are guaranteed that $g_{k+1}^i = g_{k}^i + 1 > 1.$ So: 
\[
\Lambda_k^{ii} = 1 -\frac{1}{g_{k+1}^i}\frac{g_k^j}{g_k^i + g_k^j} >  1 -\frac{g_k^j}{g_k^i + g_k^j} > 0.
\]
This gives that $\Lambda_k$ is nonnegative.

Fix $j,t$ as above, arbitrary. Note that 
\[
A_{j,t} := \Pi_{k = j}^{t} \Lambda_k
\]
is row stochastic, as it is the product of row stochastic matrices. It's also nonnegative. So, let $\vec{v}$ be an aribtrary unit vector. Note that for all $j$, $|v^j| \leq 1$ so:
\[
|(A_{j,t}v)_i| = |\sum_k (A_{j,t})^{ik} v^k| \leq  \sum_j (A_{j,t})^{ik} = 1.
\]
where we've used that $A$ is nonnegative. Thus for arbitrary unit vector, $\|A_{j,t}v\|^2 = \sum_k \big((A_{j,t}v)^k\big)^2 \leq \sum_k  1= |\mathcal{V}|$.
This proves that, for arbitrary $0 \leq j < t$, almost surely, $\|A_{j,t}\| \leq \sqrt{|\mathcal{V}|}$.
\end{proof}

Before tackling the main lemma of this section (Lemma \ref{opnorm}), we note the useful fact that for a square matrix $A$, the $\ell_2$ operator norm is equivalent to the max of the vector norms of the rows.

\begin{lemma}{\label{matrixnorm}}
\label{lemma:rownorm}
Let $A$ be an $n\times$n matrix, and let $A^i$ represent the ith row of $A$. We then have the following two inequalities, for arbitrary $1 \leq i \leq n$:
\begin{align*}
\|A^i\| &\leq \|A\|\\
\|A\| &\leq \sqrt{n}\max_{j} \|A^j\| \\
\end{align*}
Note that for $1\times n$ matrices, the matrix norm coincides with the vector norm.
\end{lemma}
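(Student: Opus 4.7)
The plan is to use the variational characterization of the operator norm, $\|A\| = \sup_{\|v\| = 1}\|Av\|$, and just chase the definitions. Both inequalities fall out by choosing the right test vectors and applying Cauchy--Schwarz; the lemma is essentially bookkeeping, and I do not anticipate any serious obstacle.

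For the first inequality, I would pick the unit vector $v = A^i / \|A^i\|$ (the case $A^i = 0$ being trivial). The $i$-th entry of $Av$ is exactly the dot product $A^i \cdot v = \|A^i\|$, so
\[
\|A\| \geq \|A v\| \geq |(Av)_i| = \|A^i\|,
\]
as desired.

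For the second inequality, I would let $v$ be an arbitrary unit vector and bound each coordinate of $Av$ by Cauchy--Schwarz:
\[
|(Av)_i| = |A^i \cdot v| \leq \|A^i\|\,\|v\| = \|A^i\| \leq \max_j \|A^j\|.
\]
Summing the squares over $i = 1,\dots,n$ gives $\|Av\|^2 \leq n \max_j \|A^j\|^2$, hence $\|Av\| \leq \sqrt{n}\,\max_j \|A^j\|$. Taking the supremum over unit $v$ yields the claim. The final sentence about $1 \times n$ matrices is immediate from the definition, since $\|A\| = \sup_{\|v\|=1} |A v|$ with $A$ a row vector is exactly $\|A^1\|$ by Cauchy--Schwarz (attained at $v = A^{1T}/\|A^1\|$).
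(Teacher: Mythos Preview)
Your proof is correct and matches the paper's argument essentially line for line: the same test vector $v = A^i/\|A^i\|$ for the first inequality, and the same Cauchy--Schwarz bound on each coordinate followed by summing squares for the second. The only difference is that you spell out the Cauchy--Schwarz step and the $1\times n$ remark explicitly, while the paper leaves these implicit.
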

\begin{proof}
Let $1\leq i \leq n$ be arbitrary. We prove the first inequality. If $A^i = \vec{0},$ we are done. Otherwise, define the vector $\vec{x}$ to have components $x^j := \frac{A^{ij}}{\|A^i\|}$. Then we have
\[
\|A\| \geq \|A\vec{x}\| \geq \|A^i\|,
\]
where the second inequality follows because the $i^\text{th}$ entry of $A \vec{x}$ is equal to $A^i \cdot x = \|A^i\|$.

Next we prove the second inequality. Let $\vec{x}$ be an arbitrary unit vector. We have
\[
\|A\vec{x}\|^2 = \sum_{j} (A^j \cdot \vec{x})^2 \leq  \sum_{j} \|A^j\|^2 \leq n \max_j \|A^j\|^2.
\]
Taking the square root of both sides, we have the desired inequality.
\end{proof}

We now add the main ingredient in the proof of Lemma 5.1, which says that for sufficiently small $j$ and on good events, the product of diffusion matrices (with consensus projected out) decays with $t$.

\begin{lemma}{\label{opnorm}}
There exists $\alpha > 0$ such that, on $E_t$, and for all $j \leq j_0 := r + \tau^2,$
\[
\|Q P^{-1}[\Pi_{k=j}^{t} \Lambda_k] P\| \lesssim \frac{1}{t^\alpha},
\]
\end{lemma}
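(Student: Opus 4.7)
The plan is to use the decomposition
\[
\Pi_{k=j}^{t}\Lambda_k \;=\; \Bigl[\Pi_{k=\tau+1}^{(t-r)/\tau} H_{k,t}\Bigr]\cdot\Bigl[\Pi_{k=j}^{j_0}\Lambda_k\Bigr]
\]
already introduced above and bound the two factors separately via submultiplicativity. The ``loose'' early factor is handled instantly: Lemma \ref{detheat} gives $\|\Pi_{k=j}^{j_0}\Lambda_k\| \leq \sqrt{|\mathcal{V}|}$ almost surely, so $\|P^{-1}[\Pi_{k=j}^{j_0}\Lambda_k]P\| \leq \|P\|\,\|P^{-1}\|\,\sqrt{|\mathcal{V}|}$ uniformly in $j$ and $t$, and all of the $t$-decay must come from the ``packed'' $H$-product. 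For that piece I would pass to the eigenbasis of $L$ by setting $\tilde H_{k,t} := P^{-1}H_{k,t}P$, $\tilde A_k := P^{-1}A_kP = I + \tfrac{1}{k}D$, and $\tilde\Theta_{k,t} := P^{-1}\Theta_{k,t}P$, so that $\tilde H_{k,t} = \tilde A_k + \tilde\Theta_{k,t}$. Because $D_{11}=0$, each $\tilde A_k$ is diagonal and commutes with $Q = \mathrm{diag}(0,1,\ldots,1)$.

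Next I would split
\[
Q\,\Pi_{k=\tau+1}^{N}\tilde H_{k,t} \;=\; Q\,\Pi_{k=\tau+1}^{N}\tilde A_k \;+\; Q\Bigl(\Pi_{k=\tau+1}^{N}\tilde H_{k,t} - \Pi_{k=\tau+1}^{N}\tilde A_k\Bigr),
\]
where $N := (t-r)/\tau$. For the main term, $Q$ and all $\tilde A_k$ commute and $Q^2 = Q$, so $Q\,\Pi\tilde A_k = \Pi(Q\tilde A_k)$ is diagonal with first entry $0$ and other entries $\Pi_{k=\tau+1}^{N}(1+\mu_i/k)$ for $i > 1$. Lemma \ref{pfspec} bounds each factor in modulus by $1-\lambda/k$, and Lemma \ref{gautschiapp} then yields
\[
\Bigl\|Q\,\Pi_{k=\tau+1}^{N}\tilde A_k\Bigr\| \;\leq\; \Pi_{k=\tau+1}^{N}\Bigl(1-\frac{\lambda}{k}\Bigr) \;\leq\; \Bigl(\frac{\tau+1}{N}\Bigr)^{\lambda} \;\lesssim\; t^{-\lambda/2},
\]
using $\tau \asymp t^{1/4}$ and $N \asymp t^{3/4}$.

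The error term I would handle by the standard telescoping identity
\[
\Pi_{k=\tau+1}^{N}\tilde H_{k,t} - \Pi_{k=\tau+1}^{N}\tilde A_k \;=\; \sum_{m=\tau+1}^{N}\Bigl(\Pi_{k=m+1}^{N}\tilde H_{k,t}\Bigr)\,\tilde\Theta_{m,t}\,\Bigl(\Pi_{k=\tau+1}^{m-1}\tilde A_k\Bigr).
\]
On $E_t$, Lemma \ref{thetasmall} supplies $\|\tilde\Theta_{m,t}\| \lesssim 1/(m\tau^{1/2-\epsilon})$, the right-hand $\tilde A$-product has norm $\leq 1$ (diagonal with entries of modulus $\leq 1$), and the left-hand $\tilde H$-product is bounded using $\|\tilde H_{k,t}\| \leq 1 + c/(k\tau^{1/2-\epsilon})$ by $\exp(c\log t / \tau^{1/2-\epsilon})$, which stays $\leq C$ for $t$ large since $\tau^{1/2-\epsilon} \gg \log t$. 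Summing over $m$ contributes $\lesssim \log(t)/\tau^{1/2-\epsilon} \lesssim \log(t)\cdot t^{-(1/2-\epsilon)/4}$, and the projection $Q$ has norm $1$ and so does not alter this. Combining with the main term gives the claim for any $\alpha < \min\{\lambda/2,\,(1/2-\epsilon)/4\}$.

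The main obstacle I anticipate is precisely the control of the accumulated operator norm $\|\Pi\tilde H_{k,t}\|$ appearing in the telescoping sum: the $\tilde H_{k,t}$ are neither diagonal nor manifestly contractions, and the argument closes only because the window size $\tau = \lceil t^{1/4}\rceil$ was chosen just large enough that $\sum_k 1/(k\tau^{1/2-\epsilon}) = o(1)$, keeping the per-step overshoot from spoiling the $t^{-\lambda/2}$ decay supplied by the dominant Perron--Frobenius factor.
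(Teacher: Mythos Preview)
Your argument is correct and follows the same overall route as the paper: the same splitting into the packed $H$-product and the loose tail, the same passage to the eigenbasis of $L$, and the same invocations of Lemmas \ref{detheat}, \ref{thetasmall}, \ref{pfspec}, and \ref{gautschiapp}. The one tactical difference is in how the perturbed product $\Pi_k(\tilde A_k+\tilde\Theta_{k,t})$ is controlled. The paper works row by row, writing $R^i_{k_{\min},k_{\max}} = \lambda_i^{(k_{\max})}R^i_{k_{\min},k_{\max}-1} + (\Theta'_{k_{\max}})^i R_{k_{\min},k_{\max}-1}$ and iterating; this places the contracting diagonal factors to the \emph{left} of each $\Theta$-insertion, so the error sum inherits the extra decay $\Pi_{k>j}(1-\lambda/k)$ and comes out as $c/\tau^{1/2-\epsilon}$ with no logarithm. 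Your telescoping puts the non-diagonal $\tilde H_k$ on the left, so $Q$ cannot be pushed through and you settle for the uniform bound on $\|\Pi\tilde H\|$, incurring a harmless $\log t$. Had you telescoped in the reverse order, $\Pi\tilde H-\Pi\tilde A=\sum_m(\Pi_{k>m}\tilde A_k)\,\tilde\Theta_m\,(\Pi_{k<m}\tilde H_k)$, the projector $Q$ would hit the diagonal left factor directly and you would recover the paper's sharper bound; this also dissolves the obstacle you flag at the end, since $\|\Pi_{k<m}\tilde H_k\|=\|P^{-1}[\Pi\Lambda]P\|$ is then bounded outright by Lemma \ref{detheat} rather than via your $(1+c/(k\tau^{1/2-\epsilon}))$ product.
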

\begin{proof}
Define $k_\text{min} := \tau + 1$, $k_{\text{max}} = \frac{t-r}{\tau}$, $\Theta'_{k,t} := P^{-1} \Theta_{k,t} P$, and $D_k = P^{-1}A_k P$ (the diagonal matrix consisting of eigenvalues of $A_k$). Now,
\[
\big[\Pi_{k = k_\text{min}}^{k_\text{max}} H_{k,t} \big] \big[\Pi_{k = j}^{ j_0} \Lambda_k \big].
\]

\begin{align*}
\|Q P^{-1}[\Pi_{k=j}^{t} \Lambda_k] P\| &\leq \|Q P^{-1} [\Pi_{k=j_0+1}^{t}\Lambda_k]P \| \|P^{-1} [\Pi_{k=j}^{j_0}\Lambda_k]P\|\\
&\lesssim \|Q P^{-1} [\Pi_{k=j_0+1}^{t}\Lambda_k]P \|\\
&= \|Q P^{-1} [\Pi_{k=k_{\text{min}}}^{k_\text{max}}H_{k,t}]P \|\\
&= \|Q P^{-1} [\Pi_{k=k_{\text{min}}}^{k_\text{max}}(A_k + \Theta_{k,t})]P \|\\
&= \|Q  [\Pi_{k=k_{\text{min}}}^{k_\text{max}}(D_k + \Theta'_{k,t})] \|\\
&\lesssim \max_{i \neq 1} \|  [\Pi_{k=k_{\text{min}}}^{k_\text{max}}(D_k + \Theta'_{k,t})]^i \|\\
&= \max_{i \neq 1} \|R^i_{k_\text{max},k_\text{min},t}\|
\end{align*}
where we've used Lemma \ref{matrixnorm} in the second to last line and we've defined 
\[
R_{k_1,k_2,t} := \Pi_{k=k_1}^{k_2}(D_k + \Theta'_{k,t}).
\]
We have, for a constant $c$, for $\epsilon < \frac{1}{2},$ and for all $i \neq 1$ (dropping primes on Theta for ease of notation),
\begin{align*}
R_{k_\text{min},k_\text{max},t}^i &= \Theta_{k_\text{max},t}^i R_{k_\text{min},k_\text{max}-1,t} + |\lambda^{(k_\text{max})}|R_{k_\text{min},k_\text{max}-1,t}^i \\
\|R_{k_\text{min},k_\text{max},t}^i\|&\leq \frac{c}{k_\text{max} \tau^{1/2-\epsilon}} + (1 - \frac{\lambda}{k_\text{max}})\|R_{k_\text{min},k_\text{max}-1,t}^i\|,
\end{align*}
where $\lambda^{(k)} \neq 1$ is an eigenvalue of $A_k$, and we've used Lemma \ref{detheat}, Lemma \ref{thetasmall}, and Lemma \ref{pfspec}. By iterating the above, we obtain
\begin{align*}
\|R_{k_\text{min},k_\text{max},t}^i\|&\leq \Pi_{k=k_\text{min}}^{k_\text{max}} (1-\frac{\lambda}{k}) +\frac{c}{\tau^{1/2-\epsilon}} \sum_{j=k_\text{min}}^{k_\text{max}} \frac{1}{j} \Pi_{k = j+1}^{k_\text{max}}  (1-\frac{\lambda}{k})\\
&\leq \big(\frac{k_\text{min}}{k_\text{max}}\big)^\lambda + \frac{c}{\tau^{1/2-\epsilon}}  \sum_{j=k_\text{min}}^{k_\text{max}} \frac{1}{j} \big(\frac{j+1}{k_\text{max}}\big)^\lambda \\
&\leq \big(\frac{k_\text{min}}{k_\text{max}}\big)^\lambda + \frac{c}{k_\text{max}^\lambda \tau^{1/2-\epsilon}}  \sum_{j=k_\text{min}}^{k_\text{max}} j^{\lambda-1} \\
&\leq \big(\frac{k_\text{min}}{k_\text{max}}\big)^\lambda+ \frac{c}{\tau^{1/2-\epsilon}} \\
&\lesssim \frac{1}{t^{\lambda/2}} + \frac{1}{t^{1/8-\epsilon/4}},
\end{align*}
where in the second and fourth inequalities, we used Lemma \ref{gautschiapp} and Lemma \ref{sumint}, respectively, and the value of $c$ can change from line to line. Setting $\epsilon = \frac{1}{4}$ (for example) concludes the proof of the lemma.
\end{proof}

Our final ingredient is the summability of $\mathbb{E}[\|\vec{\gamma}_t\|^2].$
\begin{lemma}{\label{gammasum}}
The sum
\[
\sum_{t=0}^\infty \mathbb{E}[\|\vec{\gamma}_t\|^2]
\]
converges.
\end{lemma}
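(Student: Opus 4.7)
The plan is to show that $\mathbb{E}[\|\vec{\gamma}_t\|^2] \lesssim t^{-2}$, which is trivially summable. Since $\|\vec{\gamma}_t\|^2 = \sum_{i \in \mathcal{V}} (g_t^i)^{-2}$ and $|\mathcal{V}|$ is finite, it suffices to bound $\mathbb{E}[(g_t^i)^{-2}] \lesssim t^{-2}$ for each fixed vertex $i$. The key fact, as already used in the paper, is that $g_t^i - g_0^i$ is a $\text{Binomial}(t, d_i/|\mathcal{E}|)$ random variable, so $g_t^i$ concentrates around its linearly growing mean $\tfrac{d_i}{|\mathcal{E}|}\, t$.

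The main subtlety is that $x \mapsto x^{-2}$ is convex, so Jensen runs the wrong way and one cannot conclude anything about $\mathbb{E}[(g_t^i)^{-2}]$ from $\mathbb{E}[g_t^i]$ directly. I would get around this with the same good-event / deterministic-fallback decomposition used in the proof of Lemma \ref{delta}. Define the good event
\[
C_t := \Bigl\{\, g_t^i \geq \tfrac{d_i}{2|\mathcal{E}|}\, t \;\; \forall i \in \mathcal{V}\,\Bigr\}.
\]
Applying Lemma \ref{AS} with $a = \tfrac{d_i t}{2|\mathcal{E}|}$ together with a union bound over $\mathcal{V}$ yields $\mathbb{P}(C_t^c) \leq 2|\mathcal{V}|\exp(-c_0\, t)$ for some graph-dependent constant $c_0 > 0$; this constant is strictly positive because connectedness of $G$ (with $|\mathcal{V}|\geq 2$) forces $d_i \geq 1$ for every $i$.

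It then remains to split $\mathbb{E}[\|\vec{\gamma}_t\|^2]$ into its restrictions to $C_t$ and $C_t^c$. On $C_t$ one has $(g_t^i)^{-2} \leq (2|\mathcal{E}|/(d_i t))^2$ deterministically, so $\|\vec{\gamma}_t\|^2 \mathbbm{1}_{C_t} \lesssim t^{-2}$. On $C_t^c$ I would invoke the pathwise monotonicity of $g_t^i$ (the $g$-values are nondecreasing in $t$ by construction) to get the crude but uniform bound $\|\vec{\gamma}_t\|^2 \leq \sum_i (g_0^i)^{-2} =: K$, a finite initial-condition constant. Adding the two contributions gives
\[
\mathbb{E}[\|\vec{\gamma}_t\|^2] \;\lesssim\; \frac{1}{t^2} + 2|\mathcal{V}|\, K\, e^{-c_0 t},
\]
both summable over $t\geq 1$, while the $t=0$ term is just the finite constant $\|\vec{\gamma}_0\|^2$. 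I do not anticipate any real obstacle; the only care needed is the event split that bypasses the convexity obstruction noted above, and everything else is a concentration-plus-worst-case template already present in Section 3.1.
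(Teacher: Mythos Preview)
Your proof is correct and follows essentially the same good-event/bad-event split as the paper, which conditions on the event $A_{t,(t-r)/\tau}$ from Section~4.2 and likewise obtains $\mathbb{E}[\|\vec\gamma_t\|^2] \lesssim t^{-2}$. Your choice of good event $C_t=\{g_t^i\ge \tfrac{d_i}{2|\mathcal E|}t\ \forall i\}$ is a bit cleaner---self-contained and yielding an $e^{-c_0 t}$ bad-event bound instead of the paper's $e^{-c\tau^{4\epsilon}}$---but the structure of the argument is the same.
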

\begin{proof}
For sufficiently large $t$:
\begin{align*}
\mathbb{E}[\|\vec{\gamma}_t\|^2] &= \mathbb{E}[\|\vec{\gamma}_t\|^2 |A_{t,\frac{t-r}{\tau}} ]\mathbb{P}(A_{t,\frac{t-r}{\tau}}) + \mathbb{E}[\|\vec{\gamma}_t\|^2 |A_{t\frac{t-r}{\tau}}^c ]\mathbb{P}(A_{t,\frac{t-r}{\tau}}^c) \\
&\lesssim \mathbb{E}[\|\vec{\gamma}_t\|^2 |A_{t,\frac{t-r}{\tau}} ]+\mathbb{P}(A_{t,\frac{t-r}{\tau}}^c)\\
&\lesssim \frac{1}{t^2} +  \exp(-\frac{1}{4} \tau^{4 \epsilon})\\
&\lesssim \frac{1}{t^2}
\end{align*}
\end{proof}

\subsection{Proof of Lemma 4.1}

In the proof of Lemma 4.1, we make use of the following simple comparison between a nonnegative random variable's conditional and total expectation:
\begin{lemma}{\label{conditionalexpectation}}
Let $X$ be an almost-surely nonnegative random variable with $\mathbb{E}[X] < \infty,$ and let $E$ be an event with $\mathbb{P}(E) \geq \frac{1}{2}.$ Then
\[
\mathbb{E}(X|E) \leq 2 \mathbb{E}[X]
\]
\end{lemma}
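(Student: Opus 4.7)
The plan is to use the definition of conditional expectation given an event of positive probability, namely $\mathbb{E}[X \mid E] = \mathbb{E}[X \mathbbm{1}_E] / \mathbb{P}(E)$, and then bound the numerator using nonnegativity of $X$ and the denominator using the hypothesis $\mathbb{P}(E) \geq \tfrac{1}{2}$.

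More concretely, first I would write
\[
\mathbb{E}[X \mid E] = \frac{\mathbb{E}[X \mathbbm{1}_E]}{\mathbb{P}(E)}.
\]
Since $X \geq 0$ almost surely, the integrand satisfies $X \mathbbm{1}_E \leq X$ pointwise, so monotonicity of expectation gives $\mathbb{E}[X \mathbbm{1}_E] \leq \mathbb{E}[X]$. Combining this with $\mathbb{P}(E) \geq \tfrac{1}{2}$, equivalently $1/\mathbb{P}(E) \leq 2$, yields
\[
\mathbb{E}[X \mid E] \leq \frac{\mathbb{E}[X]}{\mathbb{P}(E)} \leq 2\, \mathbb{E}[X],
\]
which is precisely the claim. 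The finiteness hypothesis $\mathbb{E}[X] < \infty$ simply ensures the conditional expectation is well-defined and that the bound is nontrivial.

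There is essentially no main obstacle here; the lemma is a one-line consequence of the definition of conditional expectation on an event, and I expect the proof in the paper to be equally short. The only minor subtlety to state carefully is that nonnegativity is used to justify the monotone bound on $\mathbb{E}[X\mathbbm{1}_E]$.
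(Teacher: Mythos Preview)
Your proof is correct and essentially identical to the paper's. The paper writes the single line $\mathbb{E}[X\mid E] = \tfrac{1}{\mathbb{P}(E)}\bigl(\mathbb{E}[X] - \mathbb{E}[X\mid E^c]\mathbb{P}(E^c)\bigr) \leq 2\,\mathbb{E}[X]$, which is the same computation phrased via the law of total expectation rather than the indicator definition; in both cases nonnegativity of $X$ is used to drop the $E^c$ contribution and $\mathbb{P}(E)\geq\tfrac{1}{2}$ gives the factor of $2$.
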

\begin{proof}
\[
\mathbb{E}[X|E] = \frac{1}{\mathbb{P}[E]}(\mathbb{E}[X] - \mathbb{E}[X|E^c]\mathbb{P}(E^c)) \leq 2 \mathbb{E}[X].
\]
\end{proof}

\begin{proof}[Proof of Lemma 4.1:]
We aim to show that
\[
\mathbb{E}[\|\vec{z}_{t+1}\|^2] \rightarrow 0,
\]
where
\[
\vec{z}_{t+1} = \sum_{j = 0}^{t+1} P Q P^{-1}\big[\Pi_{k=j}^{t} \Lambda_k \big] \vec{\gamma}^T_j \circ \vec{w}_j.
\]
Expanding the square:
\begin{align*}
\|\vec{z}_{t+1}\|^2 &= \sum_{j = 0}^{t+1} \|P Q P^{-1}\big[\Pi_{k=j}^{t} \Lambda_k \big]  \vec{\gamma}^T_j \circ \vec{w}_j\|^2 \\
&+ 2 \sum_{0 \leq j_1 < j_2 \leq t+1} \langle P Q P^{-1}\big[\Pi_{k_1=j_1}^{t} \Lambda_{k_1} \big]   \vec{\gamma}^T_{j_1} \circ \vec{w}_{j_1} , P Q P^{-1}\big[\Pi_{k_2=j_2}^{t} \Lambda_{j_2}\big]    \vec{\gamma}^T_{j_2} \circ \vec{w}_{j_2} \rangle
\end{align*}
We now take the expectation of the cross-terms. For $0 \leq j_1 < j_2 \leq t+1$:
\begin{align*}
&\mathbb{E}[ \langle P Q P^{-1}\big[\Pi_{k_1=j_1}^{t} \Lambda_{k_1} \big]   \vec{\gamma}^T_{j_1} \circ  \vec{w}_{j_1} , P Q P^{-1}\big[\Pi_{k_2=j_2}^{t} \Lambda_{j_2}\big]    \vec{\gamma}^T_{j_2} \circ \vec{w}_{j_2} \rangle]\\
&=\mathbb{E}\big[\mathbb{E}[\langle P Q P^{-1}\big[\Pi_{k_1=j_1}^{t} \Lambda_{k_1} \big]   \vec{\gamma}^T_{j_1} \circ \vec{w}_{j_1} , P Q P^{-1}\big[\Pi_{k_2=j_2}^{t} \Lambda_{j_2}\big]    \vec{\gamma}^T_{j_2} \circ \vec{w}_{j_2} \rangle|\sigma(\mathcal{F}_{j_2-1},\mathcal{G}_{t+1})] \big]\\
&=\mathbb{E}\big[\big( P Q P^{-1}\big[\Pi_{k_1=j_1}^{t} \Lambda_{k_1} \big]   \vec{\gamma}^T_{j_1} \circ \vec{w}_{j_1} \big)^TP Q P^{-1}\big[\Pi_{k_2=j_2}^{t} \Lambda_{j_2}\big] ( \vec{\gamma}^T_{j_2} \circ \mathbb{E}[\vec{w}_{j_2}|\sigma(\mathcal{F}_{j_2-1},\mathcal{G}_{t+1}) \big]\\
&= 0,
\end{align*}
where we've used independence of $\sigma(\psi_t)$ and $\sigma(\mathcal{F}_{t-1},\sigma(\Omega_t))$ as well as the fact that $\mathbb{E}_{t-1}[\vec{w}_t]=0$.

We now deal with the expectation of the 'diagonal' elements:
\begin{align*}
\mathbb{E}[\|\vec{z}_t\|^2] &=  \sum_{j = 0}^{t+1} \|P Q P^{-1}\big[\Pi_{k=j}^{t} \Lambda_k \big]  \vec{\gamma}^T_j \circ \vec{w}_j\|^2 \\
&\lesssim \sum_{j = 0}^{t+1}  \mathbb{E} [\|P Q P^{-1}\big[\Pi_{k=j}^{t} \Lambda_k \big]\|^2  \| \vec{\gamma}_j\|^2 ] \\
&=  \sum_{j=0}^{ r+\tau^2 } \mathbb{E} \big[ \|P Q P^{-1}\big[\Pi_{k=j}^{t} \Lambda_k \big]\|^2  \| \vec{\gamma}_j\|^2  \big] + \sum_{j = r+\tau^2 + 1}^{t+1} \mathbb{E} \big[\|P Q P^{-1}\big[\Pi_{k=j}^{t} \Lambda_k \big]\|^2  \| \vec{\gamma}_j\|^2  \big] 
\end{align*}
We show that each of the above two terms goes to zero. Let $\alpha > 0$ be the number guaranteed by Lemma \ref{opnorm}. The first term:
\begin{align*}
&\sum_{j = 0}^{r+\tau^2}\mathbb{E} \big[\|P Q P^{-1}\big[\Pi_{k=j}^{t} \Lambda_k \big]\|^2  \|\vec{\gamma}_j\|^2  \big]\\
&\lesssim \sum_{j = 0}^{r+\tau^2} \mathbb{E} \big[\|P Q P^{-1}\big[\Pi_{k=j}^{t} \Lambda_k \big]\|^2 \|\vec{\gamma}_j\|^2  |E_t\big] + \mathbb{E} \big[ \|P Q P^{-1}\big[\Pi_{k=j}^{t} \Lambda_k \big]\|^2 \|\vec{\gamma}_j\|^2  |E_t^c\big] \mathbb{P}(E_t^c)\\
&\lesssim  \sum_{j = 0}^{r+\tau^2} \frac{1}{t^{\alpha}} \mathbb{E} \big[ \|\vec{\gamma}_j\|^2  |E_t\big] +\mathbb{P}(E_t^c) \\
&\lesssim  \sum_{j = 0}^{r+\tau^2} \frac{1}{t^{\alpha}} \mathbb{E} \big[ \|\vec{\gamma}_j\|^2 \big] +\mathbb{P}(E_t^c) \\
&\lesssim   \frac{1}{t^{\alpha}}  + (r+\tau^2) \exp(-c \tau^{2\epsilon})\\
& \rightarrow 0,
\end{align*}
where in the second inequality we used Lemmas \ref{opnorm} and \ref{detheat}, in the third inequality we used Lemma \ref{conditionalexpectation}, and in the fourth inequality we used Lemmas \ref{gammasum} and \ref{badevents}.

And in the second term of the expansion of the diagonal sum:
\begin{align*}
&\sum_{j = r + \tau^2 + 1}^{t+1} \mathbb{E} \big[\|P Q P^{-1}\big[\Pi_{k=j}^{t} \Lambda_k \big]\|^2  \|\vec{\gamma}_j\|^2  \big]
 \lesssim \sum_{j = r + \tau^2 + 1}^{t+1} \mathbb{E} \big[  \|\vec{\gamma}_j\|^2  \big],
\end{align*}
where we've used Lemma \ref{detheat}. The right-hand side goes to $0$ By Lemma \ref{gammasum}, since the lower bound of the sum goes to $\infty.$

\end{proof}

\section{Proof of Theorem}
\begin{proof}[Proof of Theorem 1.1]
Let $a_\infty$ be the limit of $a_t = \vec{p} \cdot \vec{x}_t$, established in Lemma \ref{consensusmain}. Using the triangle inequality, we have:
\begin{align*}
\mathbb{E}[\|\vec{x}_t - a_\infty \vec{1}\|^2] &= \mathbb{E}[\|\vec{x}_t -a_t \vec{1} + a_t \vec{1} - a_\infty \vec{1}\|^2] \\
&\leq \mathbb{E}[\| a_t \vec{1} - a_\infty \vec{1}\|^2] +\mathbb{E}[\|\vec{x}_t -a_t \vec{1} \|^2] + 2\mathbb{E}[\|\vec{x}_t -a_t \vec{1}\| \| a_t \vec{1} - a_\infty \vec{1}\|] .
\end{align*}
We now show that each term goes to $0$. In the first term, we have that 
\[
\| a_t \vec{1} - a_\infty \vec{1}\|^2 = (a_t - a_\infty)^2 \|\vec{1}\|^2,
\]
the expectation of which goes to $0$ by virtue of Lemma \ref{consensusmain}. Similarly, the second term goes to $0$ due to Lemma \ref{decaymain}.

To see that the last term goes to $0$, note that $\|\vec{x}_t -a_t \vec{1}\|$ is almost surely bounded, so that
\[
\mathbb{E}[\|\vec{x}_t -a_t \vec{1}\| \| a_t \vec{1} - a_\infty \vec{1}\|]  \lesssim \mathbb{E}[\| a_t \vec{1} - a_\infty \vec{1}\|] = \|\vec{1}\| \mathbb{E}[| a_t - a_\infty|] \lesssim \| a_t - a_\infty\|_{\mathcal{L}^1}.
\]
Finally, since $ a_t \rightarrow a_\infty$ in $\mathcal{L}^2,$ convergence also holds in $\mathcal{L}^1,$ so that this term goes to $0$ as well.
\end{proof}
\newpage

\section{Future Work}
Future work might consider the rate of convergence, for example of the disagreement component $\vec{z}_t$ to $0$. Simulations inspire the following conjecture:
\begin{conjecture}
\[
\mathbb{E}[\|\vec{z}_t\|^2] \lesssim
\begin{cases}
\frac{1}{t^{2 \lambda}}  & \lambda \leq \frac{1}{2} \\
\frac{1}{t} & \lambda > \frac{1}{2}
\end{cases}
\].
\end{conjecture}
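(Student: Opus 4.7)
The plan is to upgrade the qualitative decay of Lemma \ref{decaymain} to the sharp rate by means of a one-step Lyapunov recursion. The heuristic is that on the disagreement subspace the expected dampened diffusion matrix $\mathbb{E}_{t-1}[\vec{\gamma}_t^T\circ L_{t-1}]$ is close to $L/t$ and hence, by the spectral-gap characterization of Lemma \ref{pfspec}, contracts in an appropriate inner product at rate $\lambda/t$, while the martingale increment $\vec{\gamma}_t^T\circ\vec{w}_t$ has conditional variance of order $1/t^2$. Solving the resulting affine recursion splits into two regimes according to whether the contraction or the injected noise dominates, reproducing the conjectured bounds.

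First I would derive the one-step identity. Using $L_{t-1}\vec{1}=0$ and $\vec{x}_{t-1}=a_{t-1}\vec{1}+\vec{z}_{t-1}$, the SHE projected by $\Pi := I - \vec{1}\vec{p}^{\,T}$ gives
\[
\vec{z}_t = \vec{z}_{t-1} + \Delta_t\vec{z}_{t-1} + \eta_t,\qquad \Delta_t := \Pi(\vec{\gamma}_t^T\circ L_{t-1}),\ \ \eta_t := \Pi(\vec{\gamma}_t^T\circ\vec{w}_t).
\]
To make the spectral gap appear in a quadratic form I would pass to the inner product $\langle u,v\rangle_E := u^T E^2 v$ with $E$ the degree matrix of Lemma \ref{diag}, in which $L$ is self-adjoint (since $ELE^{-1}$ is symmetric) and, on the $E$-orthogonal complement of $\vec{1}$ where $\vec{z}_t$ lives, has eigenvalues bounded above by $-\lambda$. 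Setting $V_t := \|\vec{z}_t\|_E^2$ and expanding, the two cross terms involving $\eta_t$ vanish in $\mathbb{E}_{t-1}$: conditional on the chosen edge $\psi_t=e^\star$ and $\mathcal{F}_{t-1}$, $\eta_t$ is a constant vector times $\tilde{w}_t^{e^\star}$, which has zero conditional mean. This yields
\[
\mathbb{E}_{t-1}[V_t] = V_{t-1} + 2\langle \vec{z}_{t-1},\mathbb{E}_{t-1}[\Delta_t]\vec{z}_{t-1}\rangle_E + \mathbb{E}_{t-1}[\|\Delta_t\vec{z}_{t-1}\|_E^2] + \mathbb{E}_{t-1}[\|\eta_t\|_E^2].
\]

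Next I would estimate each surviving term on the good event $E_t$. Using $\Pi L = L$ and an improved pointwise form of the estimate behind Lemma \ref{delta} giving $\mathbb{E}_{t-1}[\vec{\gamma}_t^T\circ L_{t-1}] = L/t + O(t^{-3/2})$, the self-adjoint spectral-gap calculation delivers
\[
2\langle \vec{z}_{t-1},\mathbb{E}_{t-1}[\Delta_t]\vec{z}_{t-1}\rangle_E \leq -\tfrac{2\lambda}{t}V_{t-1} + O\!\bigl(V_{t-1}/t^{3/2}\bigr).
\]
The quadratic variation $\mathbb{E}_{t-1}[\|\Delta_t\vec{z}_{t-1}\|_E^2]$ is $O(V_{t-1}/t^2)$ from the almost-sure bound $\|\vec{\gamma}_t^T\circ L_{t-1}\|\lesssim 1/t$ on $E_t$, and $\mathbb{E}_{t-1}[\|\eta_t\|_E^2]\lesssim 1/t^2$ follows from $\|\vec{\gamma}_t\|_\infty\lesssim 1/t$ together with $|w_t^i|\leq 1$ a.s. The exponentially small bad-event contributions of Lemma \ref{badevents} are absorbed into the constants. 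Taking total expectation produces the recursion
\[
\mathbb{E}[V_t] \leq \bigl(1-\tfrac{2\lambda}{t}\bigr)\mathbb{E}[V_{t-1}] + \tfrac{C}{t^2}.
\]

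Finally I would iterate via Gautschi's inequality (Lemma \ref{gautschiapp}). Writing $b_{t_0,t} := \prod_{s=t_0+1}^t(1-2\lambda/s) \asymp (t_0/t)^{2\lambda}$, Duhamel's principle gives
\[
\mathbb{E}[V_t] \lesssim \frac{1}{t^{2\lambda}}\Bigl(t_0^{2\lambda}\mathbb{E}[V_{t_0}] + \sum_{s=t_0+1}^{t}s^{2\lambda-2}\Bigr),
\]
and the tail sum is bounded for $\lambda<1/2$ (giving $t^{-2\lambda}$) and of order $t^{2\lambda-1}$ for $\lambda>1/2$ (giving $t^{-1}$), with a possible logarithmic factor at the borderline $\lambda=1/2$. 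The main obstacle is the first step, namely establishing the drift with the sharp constant $2\lambda$: the estimates of Lemma \ref{delta} supply only a rate $t^{-5/4}$ in the approximation $\mathbb{E}_{t-1}[\vec{\gamma}_t^T\circ L_{t-1}] - L/t$, so one has to verify that this error is genuinely absorbed into a $V_{t-1}\cdot o(1/t)$ residual rather than corrupting the leading $2\lambda/t$ coefficient, since any weakening of the drift constant would immediately degrade the conjectured exponent.
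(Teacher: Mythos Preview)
The paper does not prove this statement: it is explicitly a conjecture in the ``Future Work'' section, supported only by simulation (Figure 3) and the remark that the $\tau$-window technique of Section 4 gives \emph{some} decay rate in the serial-update case, but one that ``seem[s] looser than what simulation demonstrates.'' So there is no proof in the paper for you to be compared against.

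Your Lyapunov/stochastic-approximation route is genuinely different from the paper's machinery and, as far as I can see, sound. The paper attacks the qualitative statement $\mathbb{E}[\|\vec z_t\|^2]\to 0$ by expanding the product $\prod_k\Lambda_k$, grouping into $\tau$-windows, and using a Ces\`aro law of large numbers (Lemmas \ref{cesaro}--\ref{opnorm}); this produces an exponent governed by the window size $\tau(t)$ and is inherently lossy. Your one-step recursion in the $E$-weighted norm bypasses the window mechanism entirely: the drift term picks up the exact spectral gap because $L$ is self-adjoint in $\langle\cdot,\cdot\rangle_E$ and $\vec z_{t-1}$ sits in the $E$-orthogonal complement of $\vec 1$ (this is correct, since $\vec p\propto E^2\vec 1$), while the pointwise $O(t^{-5/4})$ bound from the proof of Lemma \ref{delta} on the event $C_{t-1}$ really is a $V_{t-1}\cdot o(1/t)$ perturbation and does not touch the leading $2\lambda/t$ coefficient. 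The affine recursion $\mathbb{E}[V_t]\le(1-2\lambda/t+O(t^{-5/4}))\mathbb{E}[V_{t-1}]+C/t^2$ then iterates exactly as you describe.

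Two small points to tighten. First, Lemma \ref{gautschiapp} as stated covers only exponents in $(0,1)$, so for $2\lambda\ge 1$ you need the straightforward extension via $\Gamma$-function asymptotics rather than the paper's lemma verbatim. Second, the paper's $\lambda$ is defined as $1-\max_{i>1}|1+\mu_i|$, not as $\min_{i>1}(-\mu_i)$; you should note that $|1+\mu_i|\le 1-\lambda$ together with the reality of the $\mu_i$ (from self-adjointness in $\langle\cdot,\cdot\rangle_E$) forces $\mu_i\le-\lambda$, which is exactly the inequality your drift estimate needs. With those clarifications your outline would constitute a proof of the conjecture (with the expected $\log t$ loss at $\lambda=1/2$).
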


In the case of parallel updates (i.e. all edges converse with all of their neighbors simultaneously in each time step), the above conjecture can be proven readily using the techniques from Lemma \ref{opnorm}. With the appropriate choice of $\tau (t)$, bounds on the decay rate can be proven for the present case (though these bounds seem loser than what simulation demonstrates). This discussion has been omitted because the bounds do not seem empirically tight, and the choice of $\tau (t) = \lceil t^{1/4}\rceil$ is convenient.

\begin{figure}
 \includegraphics[width=4in]{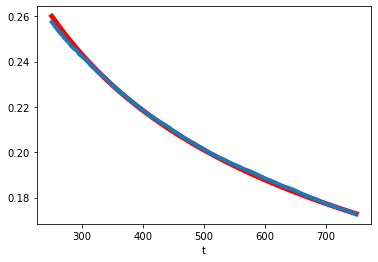}\hspace{.5em}%
\caption{The blue curve represents the average of $1000$ trajectories of $\|\vec{x}_t - a_t \vec{1} \|^2_2$, for $g^i_0 \equiv 1$ and $\vec{x}_0 = (1,1,0,0,0)$ for the graph $I_5$. \newline The red curve is $\frac{2.02}{t^{2*0.185667}}$ \newline (note: for $I_5$, $\lambda = \frac{13-\sqrt{73}}{24} \approx 0.185667$).}
\end{figure}

Figure 3 shows the decay of disagreement, averaged over $1000$ runs for the interval graph $I_5 = (\mathcal{V},\mathcal{E})$, where $\mathcal{V} = \{1,2,3,4,5\},$ and $(i,j) \in \mathcal{E}$ if and only if $|i-j| = 1$.

\section{Acknowledgment}
The author thanks Lionel Levine for his guidance throughout this research, particularly for his advice concerning the law of large numbers approach in Section 4.

\end{document}